\newcommand{\ldownarrow}{\Big\downarrow}
\newcommand{\precdot}{\prec\mathrel{\mkern-5mu}\mathrel{\cdot}}
\newtheorem{theorem}{Theorem}[section]
\newtheorem{lemma}[theorem]{Lemma}
\newtheorem{corollary}[theorem]{Corollary}
\newtheorem{example}[theorem]{Example}
\newtheorem{prop}[theorem]{Proposition}
\newtheorem*{theorem*}{Theorem}
\newtheorem{remark}[theorem]{Remark}
\begin{document}

\title{Ghost Kohnert posets}

\author[*]{Kelsey Hanser}
\author[*]{Nicholas Mayers}

\affil[*]{Department of Mathematics, North Carolina State University, Raleigh, NC, 27605}

\maketitle

\bigskip
\begin{abstract} 
\noindent
Recently, Pan and Yu showed that Lascoux polynomials can be defined in terms of certain collections of diagrams consisting of unit cells arranged in the first quadrant. Starting from certain initial diagrams, one forms a finite set of diagrams by applying two types of moves: Kohnert and ghost moves. Both moves cause at most one cell to move to a lower row with ghost moves leaving a new ``ghost cell" in its place. Each diagram formed in this way defines a monomial in the associated Lascoux polynomial. Restricting attention to diagrams formed by applying sequences of only Kohnert moves in the definition of Lascoux polynomials, one obtains the family of key polynomials. Recent articles have considered a poset structure on the collections of diagrams formed when one uses only Kohnert moves. In general, these posets are not ``well-behaved," not usually having desirable poset properties. Here, as an intermediate step to studying the analogous posets associated with Lascoux polynomials, we consider the posets formed by restricting attention to those diagrams formed by using only ghost moves. Unlike in the case of Kohnert posets, we show that such ``ghost Kohnert posets" are always ranked join semi-lattices. In addition, we establish a necessary condition for when ghost Kohnert posets are bounded and, consequently, lattices.
\end{abstract}

\section{Introduction}

In recent work \cite{AssafSchu,Kohnert,Pan1,Winkel2,Winkel1}, it has been found that many families of polynomials arising in algebraic combinatorics can be viewed as generating polynomials for certain collections of ``diagrams." Such families include Schubert \cite{LS82a}, key \cite{Dem74a,Dem74b}, and Lascoux polynomials \cite{Las04}. Within this framework, diagrams are collections of unit cells (some of which may be decorated) arranged in the first quadrant (see Figures~\ref{fig:dandm} and~\ref{fig:gm}) which are encoded by (signed) monomials where the power of $x_i$ is the number of cells in row $i$. For the polynomials known to have such definitions currently, the encoded diagrams are formed by starting from an initial ``seed" diagram and applying sequences of appropriately defined moves, i.e., Kohnert or K-Kohnert moves (see below as well as Section~\ref{sec:dagm}). Such moves effect the position of a single cell in the diagram and can additionally add decorated cells to the diagram as well. Here, we study a family of posets on decorated diagrams which arise within this framework.

Definitions for families of polynomials in terms of diagrams and moves, as described above, originated in the thesis of Kohnert \cite{Kohnert} where he showed that key polynomials admit such a definition and conjectured the same for Schubert polynomials; this conjecture was eventually proven by multiple authors \cite{AssafSchu,Winkel2,Winkel1}. The families of key and Schubert polynomials both form bases of the polynomial ring $\mathbb{Z}[x_1,x_2,\hdots]$ which consist of homogeneous polynomials that are indexed by weak compositions in the case of keys and permutations in the case of Schuberts. Given a weak composition $\alpha\in\mathbb{Z}^n_{\ge 0}$ and a permutation $w\in S_n$, we denote the associated key and Schubert polynomials by $\mathcal{K}_\alpha$ and $\mathfrak{S}_w$, respectively. Moreover, we associate with $\mathcal{K}_\alpha$ a corresponding ``key diagram," denoted $\mathbb{D}(\alpha)$, and with $\mathfrak{S}_w$ a corresponding ``Rothe diagram," denoted $\mathbb{D}(w)$. See Figure~\ref{fig:dandm} (a) for $\mathbb{D}(\alpha)$ with the weak composition $\alpha=(1,1,3,2)$ and Figure~\ref{fig:dandm} (d) for $\mathbb{D}(w)$ with the permutation $w=[4,2,5,3,1]$. For both key and Schubert polynomials, one starts from the associated diagram $D$ and forms a collection of diagrams, denoted $\mathrm{KD}(D)$, consisting of $D$ along with all those diagrams which can be formed from $D$ by applying sequences of ``Kohnert moves." Given a diagram $D$, applying a Kohnert move to a given nonempty row $r$ of $D$ causes the rightmost cell in row $r$ of $D$ to move to the first empty position below and in the same column if it exists; otherwise, the Kohnert move does nothing. In Figure~\ref{fig:dandm} we illustrate (a) $\mathbb{D}(\alpha)$ with $\alpha=(1,1,3,2)$, (b) the diagram formed from $\mathbb{D}(\alpha)$ by applying a Kohnert move at row 3, (c) the diagram formed from $\mathbb{D}(\alpha)$ by applying a Kohnert move at row 4, and (d) $\mathbb{D}(w)$ with $w=[4,2,5,3,1]\in S_5$. Note that applying a Kohnert move at row 1 or 2 of the diagram $\mathbb{D}(\alpha)$ would do nothing. 
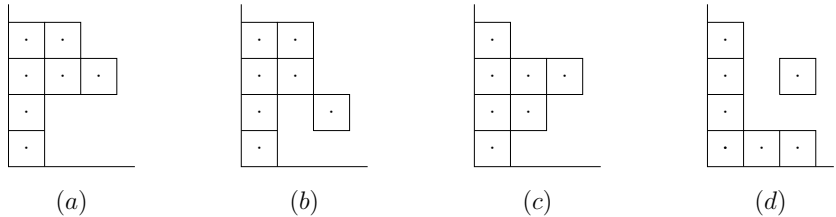
\begin{figure}[H]
    \centering
    $$\scalebox{0.8}{\begin{tikzpicture}[scale=0.6]
    \node at (0.5, 0.5) {$\cdot$};
  \node at (0.5, 1.5) {$\cdot$};
  \node at (0.5, 2.5) {$\cdot$};
  \node at (0.5, 3.5) {$\cdot$};
  \node at (1.5, 2.5) {$\cdot$};
  \node at (2.5, 2.5) {$\cdot$};
  \node at (1.5, 3.5) {$\cdot$};
  
  \draw (0,4.5)--(0,0)--(3.5,0);
  \draw (1,0)--(1,2)--(2,2)--(2,3)--(1,3)--(1,4)--(0,4);
  \draw (0,1)--(1,1);
  \draw (0,2)--(1,2)--(1,3)--(0,3);
  \draw (2,2)--(3,2)--(3,3)--(2,3);
  \draw (1,4)--(2,4)--(2,3);
  \node at (1.75, -1) {\large $(a)$};
\end{tikzpicture}}\quad\quad\quad\quad \scalebox{0.8}{\begin{tikzpicture}[scale=0.6]
    \node at (0.5, 0.5) {$\cdot$};
  \node at (0.5, 1.5) {$\cdot$};
  \node at (0.5, 2.5) {$\cdot$};
  \node at (0.5, 3.5) {$\cdot$};
  \node at (1.5, 2.5) {$\cdot$};
  \node at (2.5, 1.5) {$\cdot$};
  \node at (1.5, 3.5) {$\cdot$};
  
  \draw (0,4.5)--(0,0)--(3.5,0);
  \draw (1,0)--(1,2)--(2,2)--(2,3)--(1,3)--(1,4)--(0,4);
  \draw (0,1)--(1,1);
  \draw (0,2)--(1,2)--(1,3)--(0,3);
  \draw (2,1)--(3,1)--(3,2)--(2,2)--(2,1);
  \draw (1,4)--(2,4)--(2,3);
  \node at (1.75, -1) {\large $(b)$};
\end{tikzpicture}}\quad\quad\quad\quad \scalebox{0.8}{\begin{tikzpicture}[scale=0.6]
    \node at (0.5, 0.5) {$\cdot$};
  \node at (0.5, 1.5) {$\cdot$};
  \node at (0.5, 2.5) {$\cdot$};
  \node at (0.5, 3.5) {$\cdot$};
  \node at (1.5, 2.5) {$\cdot$};
  \node at (2.5, 2.5) {$\cdot$};
  \node at (1.5, 1.5) {$\cdot$};
  
  \draw (0,4.5)--(0,0)--(3.5,0);
  \draw (1,0)--(1,2)--(2,2)--(2,3)--(1,3)--(1,4)--(0,4);
  \draw (0,1)--(1,1);
  \draw (0,2)--(1,2)--(1,3)--(0,3);
  \draw (2,2)--(3,2)--(3,3)--(2,3);
  \draw (1,1)--(2,1)--(2,2);
  \node at (1.75, -1) {\large $(c)$};
\end{tikzpicture}}\quad\quad\quad\quad\scalebox{0.8}{\begin{tikzpicture}[scale=0.6]
    \node at (0.5, 0.5) {$\cdot$};
  \node at (0.5, 0.5) {$\cdot$};
  \node at (1.5, 0.5) {$\cdot$};
  \node at (2.5, 0.5) {$\cdot$};
  \node at (2.5, 2.5) {$\cdot$};
  \node at (0.5, 1.5) {$\cdot$};
  \node at (0.5, 2.5) {$\cdot$};
  \node at (0.5, 3.5) {$\cdot$};
  \draw (0,4.5)--(0,0)--(3.5,0);
  \draw (3,0)--(3,1)--(1,1)--(1,4)--(0,4);
  \draw (2,0)--(2,1);
  \draw (1,0)--(1,1)--(0,1);
  \draw (0,2)--(1,2);
  \draw (0,3)--(1,3);
  \draw (2,2)--(3,2)--(3,3)--(2,3)--(2,2);
  \node at (1.75, -1) {\large $(d)$};
\end{tikzpicture}}$$
    \caption{Diagrams and moves}
    \label{fig:dandm}
\end{figure}
\noindent
Using these notions, given a weak composition $\alpha$ and a permutation $w$, it was proven in \cite{Kohnert} and \cite{AssafSchu,Winkel2,Winkel1} that $$\mathcal{K}_\alpha=\sum_{D\in \mathrm{KD}(\mathbb{D}(\alpha))}\mathrm{wt}(D)\quad\quad\text{and}\quad\quad \mathfrak{S}_w=\sum_{D\in \mathrm{KD}(\mathbb{D}(w))}\mathrm{wt}(D),$$ respectively, where $\mathrm{wt}(D)=\prod_{i\ge 1}x_i^{n_i}$ with $n_i$ the number of cells in row $i$ of $D$.

Motivated by the results above concerning key and Schubert polynomials, Assaf and Searles \cite{Kohnert} define the \textit{Kohnert polynomial} of an arbitrary diagram $D$ by $$\mathfrak{K}_D=\sum_{T\in \mathrm{KD}(D)}\mathrm{wt}(T)$$ where $w(T)$ is defined as above. Now, in studying Kohnert polynomials, Assaf \cite{KP2} associates to each such polynomial $\mathfrak{K}_D$ a natural poset structure with underlying set $\mathrm{KD}(D)$ (see also \cite{KP3}). In particular, given $D_1,D_2\in \mathrm{KD}(D)$, one says that $D_2$ is smaller than $D_1$ if the smaller can be formed from the larger by applying some sequence of Kohnert moves. Moreover, in \cite{KP2} Assaf notes that such ``Kohnert posets" are not well-behaved in general, not usually having desirable poset properties; included among such misbehaved posets are some associated with both key and Schubert polynomials. As a result, various authors have attempted to characterize when Kohnert posets do have certain desirable poset properties and, in some cases, determine algebraic consequences \cite{KPoset3,KPoset1,KPoset2}; that is, relationships between desirable poset properties and properties of the associated Kohnert polynomial.

In another direction, an extension of Kohnert's definition for key and Schubert polynomials has been established for Lascoux polynomials \cite{Pan1,C1} and conjectured for Grothendieck  polynomials \cite{C2}. Here, we will be concerned with the work related to Lascoux polynomials \cite{Las04}, which are a class of nonhomogeneous polynomials indexed by weak compositions. As in the case of key and Schubert polynomials, Lascoux polynomials form a basis of the polynomial ring $\mathbb{Z}[x_1,x_2,\hdots]$. To provide the Kohnert-like definition of Lascoux polynomials established in \cite{Pan1}, we need to define a new type of move, called a ``ghost move." When applying a ghost move to a given row of a diagram $D$, one proceed exactly as when applying a Kohnert move except, when nontrivial, a new ``ghost cell" is left in place of the rightmost cell in the given row. Such ghost cells are decorated with an $\bigtimes$ in illustrations, are fixed by both Kohnert and ghost moves, and prevent cells strictly above from moving to rows strictly below. Letting $D$ be the diagram illustrated in Figure~\ref{fig:dandm} (a), in Figure~\ref{fig:gm} we illustrate (a) the diagram formed from $D$ by applying a ghost move at row 3 and (b) the diagram formed from $D$ by applying a ghost move at row 4.
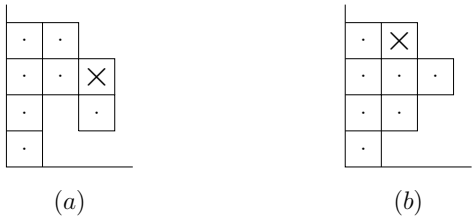
\begin{figure}[H]
    \centering
    $$\scalebox{0.8}{\begin{tikzpicture}[scale=0.6]
    \node at (0.5, 0.5) {$\cdot$};
  \node at (0.5, 1.5) {$\cdot$};
  \node at (0.5, 2.5) {$\cdot$};
  \node at (0.5, 3.5) {$\cdot$};
  \node at (1.5, 2.5) {$\cdot$};
  \node at (2.5, 1.5) {$\cdot$};
   \node at (2.5, 2.5) {$\bigtimes$};
  \node at (1.5, 3.5) {$\cdot$};
  
  \draw (0,4.5)--(0,0)--(3.5,0);
  \draw (2,3)--(3,3)--(3,2);
  \draw (1,0)--(1,2)--(2,2)--(2,3)--(1,3)--(1,4)--(0,4);
  \draw (0,1)--(1,1);
  \draw (0,2)--(1,2)--(1,3)--(0,3);
  \draw (2,1)--(3,1)--(3,2)--(2,2)--(2,1);
  \draw (1,4)--(2,4)--(2,3);
  \node at (1.75, -1) {\large $(a)$};
\end{tikzpicture}}\quad\quad\quad\quad\quad\quad\quad\quad \scalebox{0.8}{\begin{tikzpicture}[scale=0.6]
    \node at (0.5, 0.5) {$\cdot$};
  \node at (0.5, 1.5) {$\cdot$};
  \node at (0.5, 2.5) {$\cdot$};
  \node at (0.5, 3.5) {$\cdot$};
  \node at (1.5, 2.5) {$\cdot$};
  \node at (2.5, 2.5) {$\cdot$};
  \node at (1.5, 1.5) {$\cdot$};
  \node at (1.5, 3.5) {$\bigtimes$};
  
  \draw (0,4.5)--(0,0)--(3.5,0);
  \draw (1,0)--(1,2)--(2,2)--(2,3)--(1,3)--(1,4)--(0,4);
  \draw (0,1)--(1,1);
  \draw (0,2)--(1,2)--(1,3)--(0,3);
  \draw (2,2)--(3,2)--(3,3)--(2,3);
  \draw (1,1)--(2,1)--(2,2);
  \draw (2,3)--(2,4)--(1,4);
  \node at (1.75, -1) {\large $(b)$};
\end{tikzpicture}}$$
    \caption{Diagrams and moves}
    \label{fig:gm}
\end{figure}
\noindent
For a diagram $D$, we let $\mathrm{KKD}(D)$ denote the collection of diagrams consisting of $D$ along with all those diagrams that can be formed from $D$ by applying sequences of Kohnert and ghost moves. With the above notions in hand, given a weak composition $\alpha\in\mathbb{Z}_{\ge 0}^n$, in \cite{Pan1} it is shown that the associated Lascoux polynomial is equal to $$\mathcal{L}_\alpha=\sum_{D\in \mathrm{KKD}(\mathbb{D}(\alpha))}\mathrm{wt}^+(D)$$ where $\mathrm{wt}^+(D)=(-1)^g \mathrm{wt}(D)$ with $g$ equal to the number of ghost cells contained in $D$.

As in the case of Kohnert polynomials and $\mathrm{KD}(D)$, it is straightforward to associate a poset structure on the underlying set of diagrams encoded by Lascoux polynomials. More generally, for an arbitrary diagram $D$ and $D_1,D_2\in \mathrm{KKD}(D)$, we say that $D_2$ is less than $D_1$ if $D_2$ can be formed from $D_1$ by applying some sequence of Kohnert and ghost moves. Here, we initiate an investigation into this family of posets. In particular, we focus on a special subposet of such posets. Note that restricting attention to $D$ and all those diagrams which can be formed from $D$ by applying sequences of only Kohnert moves, we obtain the Kohnert poset associated with $D$. In this paper, we consider the subposet of $\mathrm{KKD}(D)$ corresponding to the other extreme; that is, we consider the subposet formed by restricting to $D$ along with all diagrams that can be formed from $D$ by applying sequences of only ghost moves. Unlike in the case of Kohnert posets, we find that these ``ghost Kohnert posets" do have some desirable properties in general. Specifically, we find that 
\begin{enumerate}
    \item[(1)] such posets are always ranked (Theorem~\ref{thm:ranked});
    \item[(2)] such posets are always join-semilattices (Theorem~\ref{thm:jsl}); and
    \item[(3)] while such posets are not always bounded, we are able to establish a necessary condition for when such posets are bounded in terms of the initial seed diagram (Theorem~\ref{thm:bounded-increase}).\footnote{In \cite{KPoset1}, a sufficient condition for a Kohnert poset to be bounded is provided by Proposition 3.2. At the time of writing, to the best of the authors knowledge, no general necessary condition is known.}
\end{enumerate}
Note that, considering (2), the necessary condition of (3) also applies for ghost Kohnert posets to be lattices.

The remainder of this article is organized as follows. In Section~\ref{sec:prelim}, we cover the necessary preliminaries concerning diagrams and posets, including the formal definition of ghost Kohnert poset. Following this, in Section~\ref{sec:results} we establish our main results, showing that ghost Kohnert posets are always ranked and join-semilattices, as well as establishing a necessary condition for when they are bounded. Also included in this final section is a discussion of directions for future research.

\section{Preliminaries}\label{sec:prelim}

In this section, we cover the preliminaries required to define ghost Kohnert posets as well as the desirable poset properties that we will be concerned with in this article. Ongoing, for $n\in\mathbb{Z}$, we let $[n]:=\{1,2,\hdots,n\}$.

\subsection{Diagrams and Ghost Moves}\label{sec:dagm}

As mentioned in the introduction, we will be interested in a class of posets whose underlying sets consist of ``diagrams." In this paper, a \textbf{diagram} is an array of finitely many cells in $\mathbb{N}\times\mathbb{N}$, where some of the cells may be decorated with an $\bigtimes$ and called \textbf{ghost cells}. Example diagrams are illustrated in Figure~\ref{fig:diagram} (a) and (b) below. Such decorated diagrams can be defined by the set of row/column coordinates of the cells defining it, where non-ghost cells are denoted by ordered pairs of the form $(r,c)$ and ghost cells by ordered pairs of the form $\langle r,c\rangle$. Consequently, if a diagram $D$ contains a non-ghost (resp., ghost) cell in position $(r,c)$, then we write $(r,c)\in D$ (resp., $\langle r,c\rangle\in D$); otherwise, we write $(r,c)\notin D$ (resp., $\langle r,c\rangle\notin D$).

\begin{example}
    The diagrams $$D_1=\{(1,1),(1,4),(2,2),(2,3),(3,1),(3,2),(3,4),(4,1)\}\quad\text{and}\quad D_2=\{(1,3),\langle2,2\rangle,(2,3),(3,1),(3,2),\langle4,1\rangle\}$$ are illustrated in Figures~\ref{fig:diagram} \textup{(a)} and \textup{(b)}, respectively.
    
    \begin{figure}[H]
    \centering
    $$\scalebox{0.8}{\begin{tikzpicture}[scale=0.6]
    \node at (0.5, 0.5) {$\cdot$};
  \node at (0.5, 2.5) {$\cdot$};
  \node at (0.5, 3.5) {$\cdot$};
  \node at (1.5, 1.5) {$\cdot$};
  \node at (1.5, 2.5) {$\cdot$};
  \node at (2.5, 1.5) {$\cdot$};
  \node at (3.5, 0.5) {$\cdot$};
  \node at (3.5, 2.5) {$\cdot$};
  \draw (0,4.5)--(0,0)--(4.5,0);
  \draw (0,2)--(1,2)--(1,4)--(0,4);
  \draw (0,3)--(1,3);
  \draw (0,1)--(1,1)--(1,0);
  \draw (1,2)--(1,1)--(2,1)--(2,3)--(1,3);
  \draw (1,2)--(2,2);
  \draw (2,1)--(3,1)--(3,2)--(2,2);
  \draw (3,0)--(3,1)--(4,1)--(4,0);
  \draw (3,2)--(4,2)--(4,3)--(3,3)--(3,2);
  \node at (2, -1) {\large $(a)$};
\end{tikzpicture}}\quad\quad\quad\quad\quad\quad\quad\quad \scalebox{0.8}{\begin{tikzpicture}[scale=0.6]
\node at (0.5,3.5) {$\bigtimes$};
\node at (0.5,2.5) {$\cdot$};
\node at (1.5,1.5) {$\bigtimes$};
\node at (1.5,2.5) {$\cdot$};
\node at (2.5,0.5) {$\cdot$};
\node at (2.5,1.5) {$\cdot$};
    \draw (0,4.5)--(0,0)--(3.5,0);
  \draw (0,2)--(1,2)--(1,4)--(0,4);
  \draw (1,2)--(1,1)--(2,1)--(2,3)--(1,3);
  \draw (1,2)--(2,2);
  \draw (0,3)--(1,3);
  \draw (2,0)--(2,1)--(3,1)--(3,0);
  \draw (2,2)--(3,2)--(3,1);
  \node at (2, -1) {\large $(b)$};
\end{tikzpicture}}$$
    \caption{Diagram}
    \label{fig:diagram}
\end{figure}
\end{example}

Now, to each nonempty row of a diagram we can apply what is called a ``$K$-Kohnert move" defined as follows. Given a diagram $D$ and a nonempty row $r$ of $D$, to apply a $K$-Kohnert move at row $r$ of $D$, we first find $(r,c)$ or $\langle r,c\rangle\in D$ with $c$ maximal, i.e., the rightmost cell in row $r$ of $D$. If 
\begin{itemize}
    \item $\langle r,c\rangle\in D$ is the rightmost cell in row $r$ of $D$, i.e., the rightmost cell in row $r$ of $D$ is a ghost cell,
    \item there exists no $\widehat{r}<r$ such that $(\widehat{r},c)\notin D$, i.e., there are no empty positions below the rightmost cell in row $r$ of $D$, or
    \item there exists $\widehat{r}<r^*<r$ such that $(\widehat{r},c)\notin D$, $\langle r^*,c\rangle\in D$, and $(\tilde{r},c)$ or $\langle\tilde{r},c\rangle\in D$ for $\widehat{r}<\tilde{r}\neq r^*<r$, i.e., there exists a ghost cell between the rightmost cell in row $r$ of $D$ and the highest empty position below,
\end{itemize}
then the $K$-Kohnert move does nothing; otherwise, there are two choices: letting $\widehat{r}<r$ be maximal such that $(\widehat{r},c)\notin D$, either
\begin{itemize}
    \item[(1)] $D$ becomes $(D\backslash (r,c))\cup \{(\widehat{r},c)\}$, i.e., the rightmost cell in row $r$ of $D$ moves to the highest empty position below, or
    \item[(2)] $D$ becomes $(D\backslash (r,c))\cup \{(\widehat{r},c),\langle r,c\rangle\}$, i.e., the rightmost cell in row $r$ of $D$ moves to the highest empty position below and leaves a ghost cell in its original position.
\end{itemize}
$K$-Kohnert moves of the form (1) are called \textbf{Kohnert moves}, while those of the form (2) are called \textbf{ghost moves}. In this article, we will make use of only ghost moves; the notions of Kohnert and $K$-Kohnert moves are defined solely for the sake of discussing motivation for such definitions (see Remark~\ref{rem:Lascoux}). Ongoing, we denote the diagram formed by applying a ghost move to a diagram $D$ at row $r$ by $\mathcal{G}(D,r)$. To aid in expressing the effect of applying a ghost move, we make use of the following notation. If applying a ghost move at row $r$ of $D$ causes the cell in position $(r,c)\in D$ to move down to position $(\widehat{r},c)$ in forming the diagram $\widehat{D}$, then we write $$\widehat{D}=\mathcal{G}(D,r)=D\ldownarrow^{(r,c)}_{(\widehat{r},c)}\cup\{\langle r,c\rangle\}$$ and refer to $(r,c)$ as a \textbf{free cell} of $D$. For a cell $(r,c)\in D$, if $(r,c)\in\mathcal{G}(D,r)$, then we call $(r,c)$ a \textbf{blocked cell}. In addition, all ghost cells are referred to as blocked cells. 

\begin{example}
    Let $D$ be the diagram illustrated in Figure~\ref{fig:diagram} \textup(a\textup) and $T$ be the diagram illustrated in Figure~\ref{fig:diagram} \textup(b\textup). The diagrams $$\mathcal{G}(D,2)=D\ldownarrow^{(2,3)}_{(1,3)}\cup\{\langle 2,3\rangle\},\quad\quad \mathcal{G}(D,3)=D\ldownarrow^{(3,4)}_{(2,4)}\cup\{\langle 3,4\rangle\},$$ and $$\mathcal{G}(D,4)=D\ldownarrow^{(4,1)}_{(2,1)}\cup\{\langle 4,1\rangle\}$$ are illustrated in Figure~\ref{fig:moves} \textup(a\textup), \textup(b\textup), and \textup(c\textup), respectively. Note that $D=\mathcal{G}(D,1)$ and $T=\mathcal{G}(T,i)$ for $i\in [4]$.
    \begin{figure}[H]
        \centering
        $$\scalebox{0.8}{\begin{tikzpicture}[scale=0.6]
    \node at (0.5, 0.5) {$\cdot$};
  \node at (0.5, 2.5) {$\cdot$};
  \node at (0.5, 3.5) {$\cdot$};
  \node at (1.5, 1.5) {$\cdot$};
  \node at (1.5, 2.5) {$\cdot$};
  \node at (2.5, 0.5) {$\cdot$};
  \node at (2.5, 1.5) {$\bigtimes$};
  \node at (3.5, 0.5) {$\cdot$};
  \node at (3.5, 2.5) {$\cdot$};
  \draw (0,4.5)--(0,0)--(4.5,0);
  \draw (0,2)--(1,2)--(1,4)--(0,4);
  \draw (0,3)--(1,3);
  \draw (0,1)--(1,1)--(1,0);
  \draw (1,2)--(1,1)--(2,1)--(2,3)--(1,3);
  \draw (2,0)--(2,1);
  \draw (1,2)--(2,2);
  \draw (2,1)--(3,1)--(3,2)--(2,2);
  \draw (3,0)--(3,1)--(4,1)--(4,0);
  \draw (3,2)--(4,2)--(4,3)--(3,3)--(3,2);
  \node at (2, -1) {\large $(a)$};
\end{tikzpicture}}\quad\quad\quad\quad\quad\quad\quad\quad\scalebox{0.8}{\begin{tikzpicture}[scale=0.6]
    \node at (0.5, 0.5) {$\cdot$};
  \node at (0.5, 2.5) {$\cdot$};
  \node at (0.5, 3.5) {$\cdot$};
  \node at (1.5, 1.5) {$\cdot$};
  \node at (1.5, 2.5) {$\cdot$};
  \node at (2.5, 1.5) {$\cdot$};
  \node at (3.5, 0.5) {$\cdot$};
  \node at (3.5, 1.5) {$\cdot$};
  \node at (3.5, 2.5) {$\bigtimes$};
  \draw (0,4.5)--(0,0)--(4.5,0);
  \draw (0,2)--(1,2)--(1,4)--(0,4);
  \draw (0,3)--(1,3);
  \draw (0,1)--(1,1)--(1,0);
  \draw (1,2)--(1,1)--(2,1)--(2,3)--(1,3);
  \draw (4,1)--(4,2);
  \draw (1,2)--(2,2);
  \draw (2,1)--(3,1)--(3,2)--(2,2);
  \draw (3,0)--(3,1)--(4,1)--(4,0);
  \draw (3,2)--(4,2)--(4,3)--(3,3)--(3,2);
  \node at (2, -1) {\large $(b)$};
\end{tikzpicture}}\quad\quad\quad\quad\quad\quad\quad\quad\scalebox{0.8}{\begin{tikzpicture}[scale=0.6]
    \node at (0.5, 0.5) {$\cdot$};
    \node at (0.5, 1.5) {$\cdot$};
  \node at (0.5, 2.5) {$\cdot$};
  \node at (0.5, 3.5) {$\bigtimes$};
  \node at (1.5, 1.5) {$\cdot$};
  \node at (1.5, 2.5) {$\cdot$};
  \node at (2.5, 1.5) {$\cdot$};
  \node at (3.5, 0.5) {$\cdot$};
  \node at (3.5, 2.5) {$\cdot$};
  \draw (0,4.5)--(0,0)--(4.5,0);
  \draw (0,2)--(1,2)--(1,4)--(0,4);
  \draw (0,3)--(1,3);
  \draw (0,1)--(1,1)--(1,0);
  \draw (1,2)--(1,1)--(2,1)--(2,3)--(1,3);
  \draw (1,2)--(2,2);
  \draw (2,1)--(3,1)--(3,2)--(2,2);
  \draw (3,0)--(3,1)--(4,1)--(4,0);
  \draw (3,2)--(4,2)--(4,3)--(3,3)--(3,2);
  \node at (2, -1) {\large $(c)$};
\end{tikzpicture}}$$
        \caption{Ghost moves}
        \label{fig:moves}
    \end{figure}
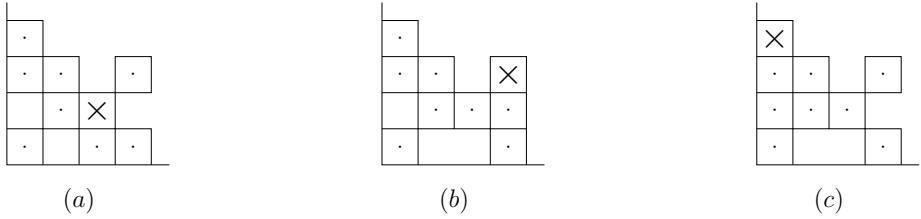
\end{example}

For a diagram $D$, we let
\begin{itemize}
    \item $\mathrm{KD}(D)$ denote the collection of diagrams formed from $D$ by applying sequences of only Kohnert moves,
    \item $\mathrm{GKD}(D)$ denote the collection of diagrams formed from $D$ by applying sequences of only ghost moves, and
    \item $\mathrm{KKD}(D)$ denote the collection of diagrams formed from $D$ by applying sequences of $K$-Kohnert moves.
\end{itemize}
Moreover, we define $$G(D)=\{\langle r,c\rangle\in D~|~r,c\in\mathbb{N}\},\quad \widehat{\mathrm{MaxG}}(D)=\max\{|G(T)|~|~T\in \mathrm{GKD}(D)\},$$ and $$R(D)=\{(r,c)\in D~|~(r,\tilde{c}),\langle r,\tilde{c}\rangle \notin D~\text{for}~\tilde{c}>c\};$$ that is, we denote by $G(D)$ the collection of ghost cells in $D$, $\widehat{\mathrm{MaxG}}(D)$ the maximum number of ghost cells contained within a diagram of $\mathrm{GKD}(D)$, and $R(D)$ the collection of rightmost cells in $D$.

\begin{remark}\label{rem:Lascoux}
    Both key and Lascoux polynomials form families of polynomials indexed by weak compositions, i.e., finite sequences of nonnegative integers. Given a weak composition $\alpha=(\alpha_1,\hdots,\alpha_n)\in\mathbb{Z}^n_{\ge 0}$, let $$\mathbb{D}(\alpha)=\bigcup_{i=1}^n\{(i,j)~|~1\le j\le \alpha_i\};$$ that is, $\mathbb{D}(\alpha)$ is the diagram with all cells left justified and with the numbers of cells per row given by $\alpha$. Denoting the key polynomial associated with a weak composition $\alpha$ by $\mathcal{K}_{\alpha}$, in \textup{\cite{Kohnert}} it is shown that $$\mathcal{K}_{\alpha}=\sum_{D\in \mathrm{KD}(\mathbb{D}(\alpha))}\mathrm{wt}(D),$$ where $\mathrm{wt}(D)=\prod_{r\ge 1}x_r^{|\{c~|~(r,c)\in D\}|}$; similarly, denoting the Lascoux polynomial associated with a weak composition $\alpha$ by $\mathcal{L}_{\alpha}$, in \textup{\cite{Pan1}} it is shown that $$\mathcal{L}_{\alpha}=\sum_{D\in \mathrm{KKD}(\mathbb{D}(\alpha))}\mathrm{wt}^+(D),$$ where $\mathrm{wt}^+(D)=\prod_{r\ge 1}(-1)^{|G(D)|}x_r^{|\{c~|~(r,c)~\text{or}~\langle r,c\rangle\in D\}|}.$
\end{remark}

\subsection{Poset theory}\label{subsec:KPos}

In this section, we cover the requisite preliminaries from the theory of posets. For more details, see \cite{EC1}.

A \textbf{finite poset} $(\mathcal{P},\preceq)$ consists of a finite underlying set $\mathcal{P}$ along with a binary relation $\preceq$ between the elements of $\mathcal{P}$ which is reflexive, anti-symmetric, and transitive. When no confusion will arise, we simply denote a poset $(\mathcal{P}, \preceq)$ by $\mathcal{P}$. 

Let $\mathcal{P}$ be a finite poset and take $x,y\in\mathcal{P}$. If $x\preceq y$ and $x\neq y$, then we call $x\preceq y$ a \textbf{strict relation} and write $x\prec y$. Ongoing, we let $\le$ and $<$ denote the relation and strict relation, respectively, corresponding to $\mathbb{Z}$ with its natural ordering. When $x\prec y$ and there exists no $z\in \mathcal{P}$ satisfying $x\prec z\prec y$, we refer to $x\prec y$ as a \textbf{covering relation} and write $x\precdot y$. Using covering relations, we define the \textbf{Hasse diagram} of $\mathcal{P}$ to be the graph whose vertices correspond to the elements of $\mathcal{P}$ and whose edges correspond to covering relations. We say $\mathcal{P}$ is \textbf{connected} if the Hasse diagram of $\mathcal{P}$ is a connected graph and say it is \textbf{disconnected} otherwise.

\begin{example}
    Let $\mathcal{P}=\{a,b,c,d,e,f\}$ be the poset defined by the relations $a\prec b,c\prec d\prec f$ and $c\prec e$. The Hasse diagram of $\mathcal{P}$ is illustrated in Figure~\ref{fig:Hasse}.
    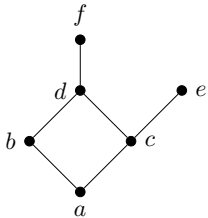
\begin{figure}[h]
        \centering
        $$\scalebox{0.9}{\begin{tikzpicture}
    \node (1) at (0,0) [circle, draw = black,fill = black, inner sep = 0.5mm, label=below:{$a$}] {};
    \node (2) at (-0.75,0.75) [circle, draw = black,fill = black, inner sep = 0.5mm, label=left:{$b$}] {};
    \node (3) at (0.75,0.75) [circle, draw = black,fill = black, inner sep = 0.5mm, label=right:{$c$}] {};
    \node (4) at (0,1.5) [circle, draw = black,fill = black, inner sep = 0.5mm, label=left:{$d$}] {};
    \node (5) at (1.5,1.5) [circle, draw = black,fill = black, inner sep = 0.5mm, label=right:{$e$}] {};
    \node (6) at (0,2.25) [circle, draw = black,fill = black, inner sep = 0.5mm, label=above:{$f$}] {};
    \draw (1)--(2)--(4)--(3)--(1);
    \draw (4)--(6);
    \draw (3)--(5);
\end{tikzpicture}}$$
        \caption{Hasse diagram}
        \label{fig:Hasse}
    \end{figure}
\end{example}

In this paper, among our posets of interest, we aim to identify those with certain desirable poset properties. The poset properties of interest are as follows. First, referring to $x\in\mathcal{P}$ as a \textbf{minimal element} (resp., \textbf{maximal element}) if there exists no $z\in\mathcal{P}$ such that $z\prec x$ (resp., $z\succ x$), we say that $\mathcal{P}$ is \textbf{bounded} provided it has both a unique minimal and unique maximal element. Next, we say that $\mathcal{P}$ is \textbf{ranked} if there exists a \textbf{rank function} $\rho:\mathcal{P}\to\mathbb{Z}_{\ge 0}$ satisfying 
\begin{enumerate}
    \item if $x\prec y$, then $\rho(x)<\rho(y)$; and
    \item if $x$ is covered by $y$, then $\rho(y)=\rho(x)+1$.
\end{enumerate}
    If a poset is ranked, then we can arrange its Hasse diagram into ``levels'' or ``ranks'' defined by the values of the associated rank function. Finally, we say that $\mathcal{P}$ is a \textbf{lattice} if between any pair of elements $x,y\in\mathcal{P}$, there exists a least upper bound, called a \textbf{join} and denoted $x\vee y$, and a greatest lower bound, called a \textbf{meet} and denoted $x\wedge y$. In the case that between any pair of elements $x,y\in\mathcal{P}$, there exists a join but not necessarily a meet, we refer to $\mathcal{P}$ as a \textbf{join-semilattice}.

% \begin{itemize}
%     \item If $x\preceq y$, we set $[x,y]=\{z\in\mathcal{P}~|~x\preceq z\preceq y\}$ and treat $[x,y]$ as a poset with the order inherited from $\mathcal{P}$; that is, for $z_1,z_2\in [x,y]$, $z_1\prec_{[x,y]}z_2$ if and only if $z_1\prec_{\mathcal{P}}z_2$.

\begin{example}
    Let $\mathcal{P}_1$, $\mathcal{P}_2$, and $\mathcal{P}_3$ denote the posets whose \textup(unlabeled\textup) Hasse diagrams are illustrated in Figure~\ref{fig:posetproperties} $(a)$, $(b)$, and $(c)$, respectively. Then we have that $\mathcal{P}_1$ is bounded, not ranked, and is a lattice; $\mathcal{P}_2$ is not bounded, is ranked, is a join-semilattice, but is not a lattice; and $\mathcal{P}_3$ is not bounded, is not a lattice nor a join-semilattice, but is ranked.
    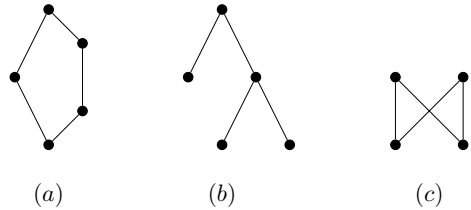
\begin{figure}[h]
        \centering
        $$\scalebox{0.9}{\begin{tikzpicture}
    \node (1) at (0,0) [circle, draw = black,fill = black, inner sep = 0.5mm] {};
    \node (2) at (-0.5,1) [circle, draw = black,fill = black, inner sep = 0.5mm] {};
    \node (3) at (0.5,0.5) [circle, draw = black,fill = black, inner sep = 0.5mm] {};
    \node (4) at (0.5,1.5) [circle, draw = black,fill = black, inner sep = 0.5mm] {};
    \node (5) at (0,2) [circle, draw = black,fill = black, inner sep = 0.5mm] {};
    \draw (1)--(2)--(5)--(4)--(3)--(1);
    \node at (0,-0.75) {$(a)$};
\end{tikzpicture}\quad\quad\quad\quad \begin{tikzpicture}
    \node (1) at (0,0) [circle, draw = black,fill = black, inner sep = 0.5mm] {};
    \node (2) at (1,0) [circle, draw = black,fill = black, inner sep = 0.5mm] {};
    \node (3) at (-0.5,1) [circle, draw = black,fill = black, inner sep = 0.5mm] {};
    \node (4) at (0.5,1) [circle, draw = black,fill = black, inner sep = 0.5mm] {};
    \node (5) at (0,2) [circle, draw = black,fill = black, inner sep = 0.5mm] {};
    \draw (1)--(4)--(2);
    \draw (4)--(5)--(3);
    \node at (0,-0.75) {$(b)$};
\end{tikzpicture}\quad\quad\quad\quad \begin{tikzpicture}
    \node (1) at (0,0) [circle, draw = black,fill = black, inner sep = 0.5mm] {};
    \node (2) at (1,0) [circle, draw = black,fill = black, inner sep = 0.5mm] {};
    \node (3) at (0,1) [circle, draw = black,fill = black, inner sep = 0.5mm] {};
    \node (4) at (1,1) [circle, draw = black,fill = black, inner sep = 0.5mm] {};
    \draw (1)--(3)--(2)--(4)--(1);
    \node at (0.5,-0.75) {$(c)$};
\end{tikzpicture}}$$
        \caption{Example posets}
        \label{fig:posetproperties}
    \end{figure}
\end{example}

%     \item A totally ordered subset $C\subset\mathcal{P}$ is called a \textbf{chain}. The \textbf{length} of a chain is one less than the number of elements that it contains. We refer to a chain $C\subset\mathcal{P}$ as \textbf{maximal} if it is contained in no larger chain of $\mathcal{P}$.

%     \item 
% \end{itemize}

We are now ready to formally define the ghost Kohnert poset associated with a diagram $D$. Given a diagram $D$, we define the \textbf{ghost Kohnert poset} $\mathcal{P}_G(D)$ to be the poset with underlying set $\mathrm{GKD}(D)$ and the following order. Given $D_1,D_2\in \mathrm{GKD}(D)$, we say $D_2\prec D_1$ if $D_2$ can be obtained from $D_1$ by applying some sequence of ghost moves. Note that by definition these posets are always connected.

\begin{example}\label{ex:poset}
    Letting $D$ be the diagram of Figure~\ref{fig:diagram} \textup(a\textup), the Hasse diagram of $\mathcal{P}_G(D)$ is illustrated in Figure~\ref{fig:GKP}.
    \begin{figure}[H]
        \centering
        $$\scalebox{0.7}{\begin{tikzpicture}
            \node (1) at (0,0) {\scalebox{0.7}{\begin{tikzpicture}[scale=0.6]
    \node at (0.5, 0.5) {$\cdot$};
    \node at (0.5, 1.5) {$\cdot$};
  \node at (0.5, 2.5) {$\cdot$};
  \node at (0.5, 3.5) {$\bigtimes$};
  \node at (1.5, 1.5) {$\cdot$};
  \node at (1.5, 2.5) {$\cdot$};
  \node at (2.5, 0.5) {$\cdot$};
  \node at (2.5, 1.5) {$\bigtimes$};
  \node at (3.5, 0.5) {$\cdot$};
  \node at (3.5, 1.5) {$\cdot$};
  \node at (3.5, 2.5) {$\bigtimes$};
  \draw (0,4.5)--(0,0)--(4.5,0);
  \draw (0,2)--(1,2)--(1,4)--(0,4);
  \draw (0,3)--(1,3);
  \draw (0,1)--(1,1)--(1,0);
  \draw (1,2)--(1,1)--(2,1)--(2,3)--(1,3);
  \draw (2,0)--(2,1);
  \draw (1,2)--(2,2);
  \draw (2,1)--(3,1)--(3,2)--(2,2);
  \draw (3,0)--(3,1)--(4,1)--(4,0);
  \draw (3,2)--(4,2)--(4,3)--(3,3)--(3,2);
  \draw (4,1)--(4,2);
\end{tikzpicture}}};
\node (2) at (-2,3) {\scalebox{0.7}{\begin{tikzpicture}[scale=0.6]
    \node at (0.5, 0.5) {$\cdot$};
  \node at (0.5, 2.5) {$\cdot$};
  \node at (0.5, 3.5) {$\cdot$};
  \node at (1.5, 1.5) {$\cdot$};
  \node at (1.5, 2.5) {$\cdot$};
  \node at (2.5, 0.5) {$\cdot$};
  \node at (2.5, 1.5) {$\bigtimes$};
  \node at (3.5, 0.5) {$\cdot$};
  \node at (3.5, 1.5) {$\cdot$};
  \node at (3.5, 2.5) {$\bigtimes$};
  \draw (0,4.5)--(0,0)--(4.5,0);
  \draw (0,2)--(1,2)--(1,4)--(0,4);
  \draw (0,3)--(1,3);
  \draw (0,1)--(1,1)--(1,0);
  \draw (1,2)--(1,1)--(2,1)--(2,3)--(1,3);
  \draw (2,0)--(2,1);
  \draw (1,2)--(2,2);
  \draw (2,1)--(3,1)--(3,2)--(2,2);
  \draw (3,0)--(3,1)--(4,1)--(4,0);
  \draw (3,2)--(4,2)--(4,3)--(3,3)--(3,2);
  \draw (4,1)--(4,2);
\end{tikzpicture}}};
\draw (1)--(2);
\node (3) at (2,3) {\scalebox{0.7}{\begin{tikzpicture}[scale=0.6]
    \node at (0.5, 0.5) {$\cdot$};
    \node at (0.5, 1.5) {$\cdot$};
  \node at (0.5, 2.5) {$\cdot$};
  \node at (0.5, 3.5) {$\bigtimes$};
  \node at (1.5, 1.5) {$\cdot$};
  \node at (1.5, 2.5) {$\cdot$};
  \node at (2.5, 0.5) {$\cdot$};
  \node at (2.5, 1.5) {$\bigtimes$};
  \node at (3.5, 0.5) {$\cdot$};
  \node at (3.5, 2.5) {$\cdot$};
  \draw (0,4.5)--(0,0)--(4.5,0);
  \draw (0,2)--(1,2)--(1,4)--(0,4);
  \draw (0,3)--(1,3);
  \draw (0,1)--(1,1)--(1,0);
  \draw (1,2)--(1,1)--(2,1)--(2,3)--(1,3);
  \draw (2,0)--(2,1);
  \draw (1,2)--(2,2);
  \draw (2,1)--(3,1)--(3,2)--(2,2);
  \draw (3,0)--(3,1)--(4,1)--(4,0);
  \draw (3,2)--(4,2)--(4,3)--(3,3)--(3,2);
\end{tikzpicture}}};
\draw (1)--(3);
\node (4) at (6,3) {\scalebox{0.7}{\begin{tikzpicture}[scale=0.6]
    \node at (0.5, 0.5) {$\cdot$};
    \node at (0.5, 1.5) {$\cdot$};
  \node at (0.5, 2.5) {$\cdot$};
  \node at (0.5, 3.5) {$\bigtimes$};
  \node at (1.5, 1.5) {$\cdot$};
  \node at (1.5, 2.5) {$\cdot$};
  \node at (2.5, 1.5) {$\cdot$};
  \node at (3.5, 0.5) {$\cdot$};
  \node at (3.5, 1.5) {$\cdot$};
  \node at (3.5, 2.5) {$\bigtimes$};
  \draw (0,4.5)--(0,0)--(4.5,0);
  \draw (0,2)--(1,2)--(1,4)--(0,4);
  \draw (0,3)--(1,3);
  \draw (0,1)--(1,1)--(1,0);
  \draw (1,2)--(1,1)--(2,1)--(2,3)--(1,3);
  \draw (1,2)--(2,2);
  \draw (2,1)--(3,1)--(3,2)--(2,2);
  \draw (3,0)--(3,1)--(4,1)--(4,0);
  \draw (3,2)--(4,2)--(4,3)--(3,3)--(3,2);
  \draw (4,1)--(4,2);
\end{tikzpicture}}};

\node (5) at (-2,6) {\scalebox{0.7}{\begin{tikzpicture}[scale=0.6]
    \node at (0.5, 0.5) {$\cdot$};
  \node at (0.5, 2.5) {$\cdot$};
  \node at (0.5, 3.5) {$\cdot$};
  \node at (1.5, 1.5) {$\cdot$};
  \node at (1.5, 2.5) {$\cdot$};
  \node at (2.5, 0.5) {$\cdot$};
  \node at (2.5, 1.5) {$\bigtimes$};
  \node at (3.5, 0.5) {$\cdot$};
  \node at (3.5, 2.5) {$\cdot$};
  \draw (0,4.5)--(0,0)--(4.5,0);
  \draw (0,2)--(1,2)--(1,4)--(0,4);
  \draw (0,3)--(1,3);
  \draw (0,1)--(1,1)--(1,0);
  \draw (1,2)--(1,1)--(2,1)--(2,3)--(1,3);
  \draw (2,0)--(2,1);
  \draw (1,2)--(2,2);
  \draw (2,1)--(3,1)--(3,2)--(2,2);
  \draw (3,0)--(3,1)--(4,1)--(4,0);
  \draw (3,2)--(4,2)--(4,3)--(3,3)--(3,2);
\end{tikzpicture}}};
\draw (2)--(5)--(3);

\node (6) at (6,6) {\scalebox{0.7}{\begin{tikzpicture}[scale=0.6]
    \node at (0.5, 0.5) {$\cdot$};
  \node at (0.5, 2.5) {$\cdot$};
  \node at (0.5, 3.5) {$\cdot$};
  \node at (1.5, 1.5) {$\cdot$};
  \node at (1.5, 2.5) {$\cdot$};
  \node at (2.5, 1.5) {$\cdot$};
  \node at (3.5, 0.5) {$\cdot$};
  \node at (3.5, 1.5) {$\cdot$};
  \node at (3.5, 2.5) {$\bigtimes$};
  \draw (0,4.5)--(0,0)--(4.5,0);
  \draw (0,2)--(1,2)--(1,4)--(0,4);
  \draw (0,3)--(1,3);
  \draw (0,1)--(1,1)--(1,0);
  \draw (1,2)--(1,1)--(2,1)--(2,3)--(1,3);
  \draw (1,2)--(2,2);
  \draw (2,1)--(3,1)--(3,2)--(2,2);
  \draw (3,0)--(3,1)--(4,1)--(4,0);
  \draw (3,2)--(4,2)--(4,3)--(3,3)--(3,2);
  \draw (4,1)--(4,2);
\end{tikzpicture}}};
\draw (6)--(4);

\node (7) at (2,6) {\scalebox{0.7}{\begin{tikzpicture}[scale=0.6]
    \node at (0.5, 0.5) {$\cdot$};
    \node at (0.5, 1.5) {$\cdot$};
  \node at (0.5, 2.5) {$\cdot$};
  \node at (0.5, 3.5) {$\bigtimes$};
  \node at (1.5, 1.5) {$\cdot$};
  \node at (1.5, 2.5) {$\cdot$};
  \node at (2.5, 1.5) {$\cdot$};
  \node at (3.5, 0.5) {$\cdot$};
  \node at (3.5, 2.5) {$\cdot$};
  \draw (0,4.5)--(0,0)--(4.5,0);
  \draw (0,2)--(1,2)--(1,4)--(0,4);
  \draw (0,3)--(1,3);
  \draw (0,1)--(1,1)--(1,0);
  \draw (1,2)--(1,1)--(2,1)--(2,3)--(1,3);
  \draw (1,2)--(2,2);
  \draw (2,1)--(3,1)--(3,2)--(2,2);
  \draw (3,0)--(3,1)--(4,1)--(4,0);
  \draw (3,2)--(4,2)--(4,3)--(3,3)--(3,2);
\end{tikzpicture}}};
\draw (3)--(7)--(4);

\node (8) at (2,9) {\scalebox{0.7}{\begin{tikzpicture}[scale=0.6]
    \node at (0.5, 0.5) {$\cdot$};
  \node at (0.5, 2.5) {$\cdot$};
  \node at (0.5, 3.5) {$\cdot$};
  \node at (1.5, 1.5) {$\cdot$};
  \node at (1.5, 2.5) {$\cdot$};
  \node at (2.5, 1.5) {$\cdot$};
  \node at (3.5, 0.5) {$\cdot$};
  \node at (3.5, 2.5) {$\cdot$};
  \draw (0,4.5)--(0,0)--(4.5,0);
  \draw (0,2)--(1,2)--(1,4)--(0,4);
  \draw (0,3)--(1,3);
  \draw (0,1)--(1,1)--(1,0);
  \draw (1,2)--(1,1)--(2,1)--(2,3)--(1,3);
  \draw (1,2)--(2,2);
  \draw (2,1)--(3,1)--(3,2)--(2,2);
  \draw (3,0)--(3,1)--(4,1)--(4,0);
  \draw (3,2)--(4,2)--(4,3)--(3,3)--(3,2);
\end{tikzpicture}}};
\draw (5)--(8)--(6);
\draw (8)--(7);
        \end{tikzpicture}}$$
        \caption{Ghost Kohnert poset}
        \label{fig:GKP}
    \end{figure}
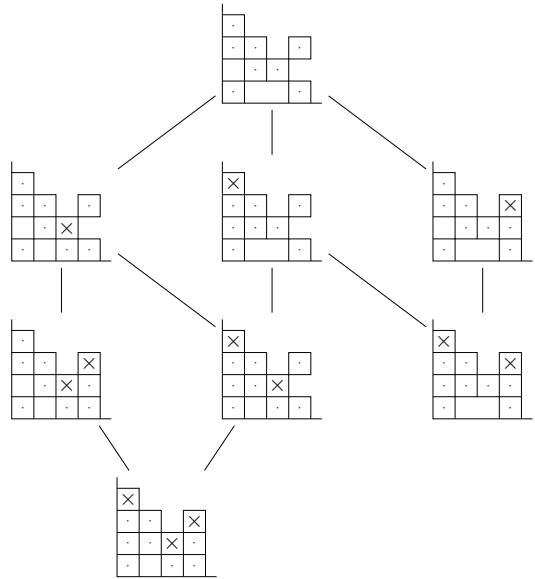
\end{example}

\section{Results}\label{sec:results}

In this section, we consider the structure of $\mathcal{P}_G(D)$. Specifically, we show that $\mathcal{P}_G(D)$ is a ranked join-semilattice for all choices of diagram $D$ with $|G(D)|=0$, i.e., that contains no ghost cells. Moreover, we establish a necessary condition for when $\mathcal{P}_G(D)$ is bounded in terms of $D$. 

\subsection{Ranked}

In this subsection, we show that $\mathcal{P}_G(D)$ is ranked for all choices of diagram $D$ with $|G(D)|=0$. To do so, we identify a natural rank function.

Given a diagram $D$ with $|G(D)|=0$ and $D_1,D_2\in\mathcal{P}_G(D)$ satisfying $D_2\precdot D_1$, considering the definition of our ordering, we have that $D_2$ can be formed from $D_1$ by applying a single ghost move. As a result, $D_2$ contains exactly one more ghost cell than $D_1$, i.e., $|G(D_1)|+1=|G(D_2)|$. Consequently, $|G(T)|$ for $T\in\mathcal{P}_G(D)$ starts at 0 for $T=D$ and increases as one moves downward in the Hasse diagram. Thus, the number of ghost cells contained in a given diagram would form a good candidate for a rank function of $\mathcal{P}_G(D)$ if all the inequalities in the definition of rank function were reversed. To fix this, taking $M=\widehat{\mathrm{MaxG}}(D)$, we instead consider the value $\rho(T)=M-|G(T)|$ for $T\in \mathcal{P}_G(D)$.

\begin{example}
    Let $D_1$ and $D_2$ be the diagrams contained in the ghost Kohnert poset $\mathcal{P}_G(D)$ of Example~\ref{ex:poset} which are illustrated in Figure~\ref{fig:ranked} $(a)$ and $(b)$, respectively. Computing, we have that $\rho(D_1)=3-3=0$ and $\rho(D_2)=3-2=1$.
    \begin{figure}[H]
        \centering
        $$\scalebox{0.8}{\begin{tikzpicture}[scale=0.6]
    \node at (0.5, 0.5) {$\cdot$};
    \node at (0.5, 1.5) {$\cdot$};
  \node at (0.5, 2.5) {$\cdot$};
  \node at (0.5, 3.5) {$\bigtimes$};
  \node at (1.5, 1.5) {$\cdot$};
  \node at (1.5, 2.5) {$\cdot$};
  \node at (2.5, 0.5) {$\cdot$};
  \node at (2.5, 1.5) {$\bigtimes$};
  \node at (3.5, 0.5) {$\cdot$};
  \node at (3.5, 1.5) {$\cdot$};
  \node at (3.5, 2.5) {$\bigtimes$};
  \draw (0,4.5)--(0,0)--(4.5,0);
  \draw (0,2)--(1,2)--(1,4)--(0,4);
  \draw (0,3)--(1,3);
  \draw (0,1)--(1,1)--(1,0);
  \draw (1,2)--(1,1)--(2,1)--(2,3)--(1,3);
  \draw (2,0)--(2,1);
  \draw (1,2)--(2,2);
  \draw (2,1)--(3,1)--(3,2)--(2,2);
  \draw (3,0)--(3,1)--(4,1)--(4,0);
  \draw (3,2)--(4,2)--(4,3)--(3,3)--(3,2);
  \draw (4,1)--(4,2);
  \node at (2,-1) {\large $(a)$};
\end{tikzpicture}}\quad\quad\quad\quad \scalebox{0.8}{\begin{tikzpicture}[scale=0.6]
    \node at (0.5, 0.5) {$\cdot$};
    \node at (0.5, 1.5) {$\cdot$};
  \node at (0.5, 2.5) {$\cdot$};
  \node at (0.5, 3.5) {$\bigtimes$};
  \node at (1.5, 1.5) {$\cdot$};
  \node at (1.5, 2.5) {$\cdot$};
  \node at (2.5, 0.5) {$\cdot$};
  \node at (2.5, 1.5) {$\bigtimes$};
  \node at (3.5, 0.5) {$\cdot$};
  \node at (3.5, 2.5) {$\cdot$};
  \draw (0,4.5)--(0,0)--(4.5,0);
  \draw (0,2)--(1,2)--(1,4)--(0,4);
  \draw (0,3)--(1,3);
  \draw (0,1)--(1,1)--(1,0);
  \draw (1,2)--(1,1)--(2,1)--(2,3)--(1,3);
  \draw (2,0)--(2,1);
  \draw (1,2)--(2,2);
  \draw (2,1)--(3,1)--(3,2)--(2,2);
  \draw (3,0)--(3,1)--(4,1)--(4,0);
  \draw (3,2)--(4,2)--(4,3)--(3,3)--(3,2);
  \node at (2,-1) {\large $(b)$};
\end{tikzpicture}}$$
        \caption{$\rho(D)$}
        \label{fig:ranked}
    \end{figure}
\end{example}
 
\begin{theorem}\label{thm:ranked}
    For any diagram $D$ with $|G(D)|=0$, the poset $\mathcal{P}_G(D)$ is ranked. Moreover, if $M=\widehat{\mathrm{MaxG}}(D)$, then a rank function is given by $\rho(T)=M-|G(T)|$.
\end{theorem}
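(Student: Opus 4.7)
The plan is to verify directly that $\rho(T) = M - |G(T)|$ satisfies the two defining properties of a rank function and takes values in $\mathbb{Z}_{\ge 0}$. The proof rests almost entirely on one elementary bookkeeping identity about ghost moves.

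First, I would record the key invariant: every nontrivial ghost move applied to a diagram $T$ produces a diagram $T' = \mathcal{G}(T,r)$ with $|G(T')| = |G(T)|+1$. This is immediate from the definition in Section~\ref{sec:dagm}, since such a move removes one non-ghost cell from $(r,c)$, reinserts it as a non-ghost cell at $(\widehat{r},c)$, and leaves exactly one new ghost cell at $(r,c)$. Consequently, if $D_2$ is reached from $D_1$ by a sequence of $k$ nontrivial ghost moves, then $|G(D_2)| = |G(D_1)| + k$.

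Next, I would show that every covering relation $D_2 \precdot D_1$ in $\mathcal{P}_G(D)$ is realized by exactly one nontrivial ghost move. If such a covering relation were realized by a sequence of $k \ge 2$ nontrivial ghost moves, let $D_3$ denote the diagram obtained after performing the first one. By the invariant, $|G(D_3)| = |G(D_1)|+1$ while $|G(D_2)| \ge |G(D_1)|+2$, so $D_3$ is distinct from both $D_1$ and $D_2$ and $D_2 \prec D_3 \prec D_1$, contradicting $D_2\precdot D_1$. Hence $k = 1$ for any covering relation, and $|G(D_2)| = |G(D_1)| + 1$.

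With these two facts in hand, the verification of the axioms is routine. First, $\rho(T) \in \mathbb{Z}_{\ge 0}$ for every $T \in \mathrm{GKD}(D)$, since $|G(T)| \le M$ by definition of $M = \widehat{\mathrm{MaxG}}(D)$. Condition (2) follows from the previous paragraph: if $D_2 \precdot D_1$, then $|G(D_2)| = |G(D_1)| + 1$ and hence $\rho(D_1) = \rho(D_2) + 1$. Condition (1) follows from the invariant: if $D_2 \prec D_1$, the realizing sequence has some positive length $k$, so $|G(D_2)| = |G(D_1)| + k > |G(D_1)|$ and $\rho(D_2) < \rho(D_1)$. I do not anticipate any serious obstacle; the only mildly subtle point is the argument that a covering relation must arise from a single nontrivial ghost move, which is handled by the ghost-count comparison above.
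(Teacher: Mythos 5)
Your proof is correct and follows essentially the same route as the paper's: both verify the two rank-function axioms directly from the observation that each nontrivial ghost move increases the ghost-cell count by exactly one, so that $\rho(T)=M-|G(T)|$ decreases by exactly one along covering relations. Your explicit ghost-count argument that a covering relation must be realized by a single nontrivial ghost move is a detail the paper only asserts, but it is the same underlying idea rather than a different approach.
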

\begin{proof}
    Taking $D_1,D_2\in\mathcal{P}_G(D)$, if $D_2\prec D_1$, then $D_2$ can be formed from $D_1$ by applying a sequence of ghost moves. Consequently, $|G(D_1)|<|G(D_2)|$ so that $$\rho(D_2)=M-|G(D_2)|>M-|G(D_1)|=\rho(D_1).$$ Moreover, as noted above, if $D_2\precdot D_1$, then $|G(D_1)|=|G(D_2)|+1$ so that $$\rho(D_2)=M-|G(D_2)|=M-(|G(D_1)|-1)=(M-|G(D_1)|)+1=\rho(D_1)+1.$$ The result follows.
\end{proof}

\begin{corollary}\label{cor:cover}
    Let $D$ be a diagram with $|G(D)|=0$. Then $D_1,D_2\in\mathcal{P}_G(D)$ satisfy $D_2\precdot D_1$ if and only if there exists $r\in\mathbb{Z}_{>0}$ such that $D_2=\mathcal{G}(D_1,r)$.
\end{corollary}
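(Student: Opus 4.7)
The plan is to derive this corollary directly from Theorem~\ref{thm:ranked}, using the central fact that each nontrivial ghost move inserts exactly one new ghost cell into a diagram, so that changes in the rank function $\rho(T) = M - |G(T)|$ correspond precisely to counts of nontrivial ghost moves.

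I would start with the backward direction. Suppose $D_2 = \mathcal{G}(D_1, r)$ for some $r$ with $D_2 \neq D_1$ (if $D_2 = D_1$, the move was trivial, which cannot yield a covering relation). By definition of the order on $\mathcal{P}_G(D)$, applying one nontrivial ghost move gives $D_2 \prec D_1$. Since the move is nontrivial, it deposits exactly one new ghost cell, so $|G(D_2)| = |G(D_1)| + 1$, whence $\rho(D_1) = \rho(D_2) + 1$ by Theorem~\ref{thm:ranked}. If some $T \in \mathcal{P}_G(D)$ satisfied $D_2 \prec T \prec D_1$, then property (1) of the rank function would force $\rho(D_2) > \rho(T) > \rho(D_1) = \rho(D_2)+1$, impossible in $\mathbb{Z}$. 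Hence $D_2 \precdot D_1$.

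For the forward direction, suppose $D_2 \precdot D_1$. By definition of the order there is a sequence $D_1 = T_0, T_1, \ldots, T_k = D_2$ with each $T_i = \mathcal{G}(T_{i-1}, r_i)$ and (without loss of generality, by deleting trivial steps) $T_i \neq T_{i-1}$. Each such step contributes exactly one new ghost cell, so $|G(D_2)| = |G(D_1)| + k$, i.e., $\rho(D_2) = \rho(D_1) - k$. On the other hand, $D_2 \precdot D_1$ together with Theorem~\ref{thm:ranked} gives $\rho(D_1) = \rho(D_2) + 1$, forcing $k = 1$. Therefore $D_2 = \mathcal{G}(D_1, r_1)$, as required.

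I do not anticipate any substantive obstacle: the entire argument is a bookkeeping exercise on ghost-cell counts, and Theorem~\ref{thm:ranked} does all the heavy lifting. The only minor point requiring care is suppressing trivial ghost moves in the reduction sequence so that ``one move'' in the combinatorial sense aligns with ``one rank step'' in the poset-theoretic sense.
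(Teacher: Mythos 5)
Your proposal is correct and takes essentially the same route as the paper: both directions reduce to the observation that a nontrivial ghost move adds exactly one ghost cell, combined with the rank function $\rho(T)=M-|G(T)|$ from Theorem~\ref{thm:ranked} to rule out intermediate elements. Your forward direction spells out the ``single move'' claim that the paper only asserts by reference to its pre-theorem discussion, and your explicit exclusion of trivial moves is a slightly more careful reading of the statement, but the substance is identical.
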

\begin{proof}
    As noted above, $D_2\precdot D_1$ implies that $D_2$ can be formed from $D_1$ by applying a single ghost move, i.e., there exists $r\in\mathbb{Z}_{>0}$ such that $D_2=\mathcal{G}(D_1,r)$. On the other hand, assume that there exists $r\in\mathbb{Z}_{>0}$ such that $D_2=\mathcal{G}(D_1,r)$, but $D_1$ does not cover $D_2$ in $\mathcal{P}_G(D)$. Then there exists $T\in\mathcal{P}_G(D)$ such that $D_2\prec T\prec D_1$. Since $\mathcal{P}_G(D)$ is ranked by $\rho(\tilde{D})=M - |G(\tilde{D})|$ for $\tilde{D}\in\mathcal{P}_G(D)$, it follows that $|G(D_1)|<|G(T)|<|G(D_2)|$, i.e., $|G(D_2)|>|G(D_1)|+1$; but, by definition, $|G(D_2)|=|G(D_1)|+1$, which is a contradiction. Thus, no such $T\in\mathcal{P}_G(D)$ can exist so that $D_2\precdot D_1$.
\end{proof}

\subsection{Join-Semilattice}

Next, we show that $\mathcal{P}_G(D)$ is a join-semilattice for all choices of diagram $D$ with $|G(D)|=0$. To do so, we make use of Lemma~\ref{lem:nathan} below which follows from \textup{\cite[Lemma 9-2.5]{Nathan}}.

\begin{lemma}\label{lem:nathan}
    Let $P$ be a finite poset with a unique maximal element. Suppose that, for any $x$ and $y$ in $P$ such that $x$ and $y$ are covered by a common element $z$, either $\{x,y\}$ has no lower bound or the meet $x\wedge y$ exists. Then $P$ is a join-semilattice.
\end{lemma}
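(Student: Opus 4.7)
The plan is to prove that every pair $a,b \in P$ has a join. Since $P$ has a unique maximum, the set $U(a,b)$ of common upper bounds of $\{a,b\}$ is always non-empty, so it suffices to show $U(a,b)$ has a unique minimal element. I would argue by contradiction, choosing a ``minimal'' counterexample — say, a pair $(a,b)$ for which $U(a,b)$ has at least two distinct minimal elements, chosen so that $|U(a,b)|$ is as small as possible among all such counterexamples.

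The heart of the argument is to produce a pair of distinct elements covered by a common element in $P$, so that the hypothesis applies. Fix two distinct minimal upper bounds $u_1,u_2 \in U(a,b)$ and take $z$ to be a minimal element of $U(u_1,u_2)$, which exists since $P$ has a unique top. Fix saturated chains $u_1 = v_0 \lessdot v_1 \lessdot \cdots \lessdot v_n = z$ and $u_2 = w_0 \lessdot w_1 \lessdot \cdots \lessdot w_m = z$. Minimality of $z$ in $U(u_1,u_2)$ forces $v_{n-1} \not\geq u_2$ and $w_{m-1} \not\geq u_1$, and therefore $v_{n-1} \neq w_{m-1}$; these are distinct elements covered by the common element $z$, and they share $a$ as a lower bound. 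By hypothesis, the meet $w := v_{n-1} \wedge w_{m-1}$ exists.

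The element $w$ lies in $U(a,b)$, since both $a$ and $b$ are lower bounds of $v_{n-1}$ and $w_{m-1}$. Moreover, $w \leq v_{n-1} \not\geq u_2$ forces $w \not\geq u_2$, and symmetrically $w \not\geq u_1$. Since $P$ is finite, $w$ must dominate some minimal element of $U(a,b)$, but that minimal element cannot be $u_1$ or $u_2$. If $u_1,u_2$ were the only minimal upper bounds of $\{a,b\}$, this is an immediate contradiction; in general, the existence of this ``extra'' upper bound $w$ must be leveraged to produce a smaller counterexample — for instance, by passing to a pair in the strict filter above $w$ or to a related refinement — contradicting the minimal choice of $(a,b)$.

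The main obstacle is precisely this last step: when $\{a,b\}$ has three or more minimal upper bounds, closing the loop requires careful iteration or a cleverly chosen minimization parameter. The chain-based construction of $v_{n-1}, w_{m-1}$ and the application of the hypothesis are clean, and the case of exactly two minimal upper bounds is immediate; but the inductive bookkeeping that turns ``$w$ dominates a third minimal upper bound of $\{a,b\}$'' into a genuine contradiction is the subtle combinatorial core of the argument.
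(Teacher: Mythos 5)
The paper itself does not prove this lemma; it imports it from the literature (it is Reading's Lemma 9-2.5, a relative of the Bj\"orner--Edelman--Ziegler covering lemma), so your argument has to stand on its own. It does not quite: the gap you flag at the end is genuine, and the extremal parameter you chose cannot close it. Everything through the construction of $w=v_{n-1}\wedge w_{m-1}$ is correct, as are the conclusions $w\in U(a,b)$, $w\not\geq u_1$, $w\not\geq u_2$, so $w$ lies above a third minimal upper bound $u_3\notin\{u_1,u_2\}$. But your construction never changes the pair $(a,b)$ --- it only manufactures another minimal upper bound of the \emph{same} pair --- so minimizing $|U(a,b)|$ over counterexample pairs gives you nothing that decreases, and ``passing to a pair in the strict filter above $w$'' is neither carried out nor obviously available: a pair living above $w$ has no reason to inherit the failure of joins from $(a,b)$.

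The descent should happen in the element $z$, not in the pair. Among all tuples $(a,b,u_1,u_2,z)$ with $u_1\neq u_2$ minimal upper bounds of $\{a,b\}$ and $z$ minimal in $U(u_1,u_2)$, choose one whose $z$ is minimal in $P$ among all $z$'s arising this way (such a choice exists by finiteness). Your construction then yields $u_3\leq w\leq v_{n-1}$, a minimal upper bound of $\{a,b\}$ distinct from $u_1$; since $v_{n-1}\geq u_1$ and $v_{n-1}\geq u_3$, the set $U(u_1,u_3)$ contains $v_{n-1}$ and therefore has a minimal element $z'\leq v_{n-1}<z$, and the tuple $(a,b,u_1,u_3,z')$ contradicts the minimality of $z$. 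Equivalently, one can run a downward induction on the length of the longest chain from $z$ to $\hat{1}$. With that replacement your argument is complete; as written, the ``inductive bookkeeping'' you defer is exactly the content of the lemma rather than a routine detail.
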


\begin{example}
    Let $D_1$, $D_2$, and $D_3$ be the diagrams contained in the ghost Kohnert poset $\mathcal{P}_G(D)$ of Example~\ref{ex:poset} which are illustrated in Figure~\ref{fig:joinsemi} $(a)$, $(b)$, and $(c)$, respectively. Note that $D_1$, $D_2$, and $D_3$ are covered by a common element, i.e., $D$ of Example~\ref{ex:poset}. While $D_1$ and $D_2$ as well as $D_2$ and $D_3$ have a meet, $D_1$ and $D_3$ have no common lower bound.
    \begin{figure}[h]
        \centering
        $$\scalebox{0.8}{\begin{tikzpicture}[scale=0.6]
    \node at (0.5, 0.5) {$\cdot$};
  \node at (0.5, 2.5) {$\cdot$};
  \node at (0.5, 3.5) {$\cdot$};
  \node at (1.5, 1.5) {$\cdot$};
  \node at (1.5, 2.5) {$\cdot$};
  \node at (2.5, 0.5) {$\cdot$};
  \node at (2.5, 1.5) {$\bigtimes$};
  \node at (3.5, 0.5) {$\cdot$};
  \node at (3.5, 2.5) {$\cdot$};
  \draw (0,4.5)--(0,0)--(4.5,0);
  \draw (0,2)--(1,2)--(1,4)--(0,4);
  \draw (0,3)--(1,3);
  \draw (0,1)--(1,1)--(1,0);
  \draw (1,2)--(1,1)--(2,1)--(2,3)--(1,3);
  \draw (2,0)--(2,1);
  \draw (1,2)--(2,2);
  \draw (2,1)--(3,1)--(3,2)--(2,2);
  \draw (3,0)--(3,1)--(4,1)--(4,0);
  \draw (3,2)--(4,2)--(4,3)--(3,3)--(3,2);
  \node at (2,-1) {\large $(a)$};
\end{tikzpicture}}\quad\quad\quad\quad\scalebox{0.8}{\begin{tikzpicture}[scale=0.6]
    \node at (0.5, 0.5) {$\cdot$};
    \node at (0.5, 1.5) {$\cdot$};
  \node at (0.5, 2.5) {$\cdot$};
  \node at (0.5, 3.5) {$\bigtimes$};
  \node at (1.5, 1.5) {$\cdot$};
  \node at (1.5, 2.5) {$\cdot$};
  \node at (2.5, 1.5) {$\cdot$};
  \node at (3.5, 0.5) {$\cdot$};
  \node at (3.5, 2.5) {$\cdot$};
  \draw (0,4.5)--(0,0)--(4.5,0);
  \draw (0,2)--(1,2)--(1,4)--(0,4);
  \draw (0,3)--(1,3);
  \draw (0,1)--(1,1)--(1,0);
  \draw (1,2)--(1,1)--(2,1)--(2,3)--(1,3);
  \draw (1,2)--(2,2);
  \draw (2,1)--(3,1)--(3,2)--(2,2);
  \draw (3,0)--(3,1)--(4,1)--(4,0);
  \draw (3,2)--(4,2)--(4,3)--(3,3)--(3,2);
  \node at (2,-1) {\large $(b)$};
\end{tikzpicture}}\quad\quad\quad\quad\scalebox{0.8}{\begin{tikzpicture}[scale=0.6]
    \node at (0.5, 0.5) {$\cdot$};
  \node at (0.5, 2.5) {$\cdot$};
  \node at (0.5, 3.5) {$\cdot$};
  \node at (1.5, 1.5) {$\cdot$};
  \node at (1.5, 2.5) {$\cdot$};
  \node at (2.5, 1.5) {$\cdot$};
  \node at (3.5, 0.5) {$\cdot$};
  \node at (3.5, 1.5) {$\cdot$};
  \node at (3.5, 2.5) {$\bigtimes$};
  \draw (0,4.5)--(0,0)--(4.5,0);
  \draw (0,2)--(1,2)--(1,4)--(0,4);
  \draw (0,3)--(1,3);
  \draw (0,1)--(1,1)--(1,0);
  \draw (1,2)--(1,1)--(2,1)--(2,3)--(1,3);
  \draw (1,2)--(2,2);
  \draw (2,1)--(3,1)--(3,2)--(2,2);
  \draw (3,0)--(3,1)--(4,1)--(4,0);
  \draw (3,2)--(4,2)--(4,3)--(3,3)--(3,2);
  \draw (4,1)--(4,2);
  \node at (2,-1) {\large $(c)$};
\end{tikzpicture}}$$
        \caption{Lemma~\ref{lem:nathan}}
        \label{fig:joinsemi}
    \end{figure}
\end{example}

Note that to apply Lemma~\ref{lem:nathan}, one needs to show that certain pairs of elements (those covered by a common element) in the poset of interest either have a meet or have no lower bound.  In Proposition~\ref{prop:meet} below, we identify a condition under which such pairs of elements in $\mathcal{P}_G(D)$ have a meet. For the proof of Proposition~\ref{prop:meet}, we require the following lemmas.

\begin{lemma}\label{lem:block}
    Let $D$ be a diagram with $|G(D)|=0$ and $T\in \mathrm{GKD}(D)$.
    \begin{enumerate}
        \item[$(a)$] If $(r,c)\in T$, then for all $\tilde{T}\prec T$ either $(r,c)\in\tilde{T}$ or $\langle r,c\rangle\in\tilde{T}$.
        \item[$(b)$] Suppose that that there exists $c,r_1,r_2\in\mathbb{Z}_{>0}$ such that $r_1<r_2$ and $\langle r_1,c\rangle,(\tilde{r},c)\in T$ for all $r_1<\tilde{r}\le r_2$. Then $(r_2,c)\in\tilde{T}$ for all $\tilde{T}\preceq T$. 
    \end{enumerate}
\end{lemma}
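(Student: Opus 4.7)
The plan is to prove both parts by carefully tracking what a single ghost move can do at a fixed position $(r,c)$: it either leaves $(r,c)$ undisturbed, or (when $s=r$ and $(r,c)$ is the rightmost cell of row $r$) it replaces the cell at $(r,c)$ by a ghost cell $\langle r,c\rangle$, while ghost cells themselves are fixed by every ghost move.

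For part (a), I would proceed by induction on the length of a shortest sequence of ghost moves witnessing $\tilde T \preceq T$, with the trivial base case of length zero. For the inductive step, write $\tilde T = \mathcal G(T',s)$ for some $T'$ with $T \succeq T' \succ \tilde T$; by the inductive hypothesis either $(r,c) \in T'$ or $\langle r,c\rangle \in T'$. In the latter case $\langle r,c\rangle \in \tilde T$ since ghost cells are fixed. In the former case, either the move at row $s$ does not alter position $(r,c)$, so $(r,c) \in \tilde T$, or the move actually moves $(r,c)$, which forces $s = r$ and leaves $\langle r,c \rangle \in \tilde T$ by definition of a ghost move.

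For part (b), suppose for contradiction that $(r_2,c) \notin \tilde T$ for some $\tilde T \preceq T$, and fix a ghost-move sequence $T = T_0 \succ T_1 \succ \cdots \succ T_k = \tilde T$. Let $i$ be the least index with $(r_2,c) \notin T_i$; by part (a) applied inside this sequence we have $\langle r_2,c\rangle \in T_i$, so the step from $T_{i-1}$ to $T_i$ is a ghost move at row $r_2$ that moves $(r_2,c)$ downward, with $(r_2,c)$ the rightmost cell of row $r_2$ in $T_{i-1}$. Applying (a) to $T_{i-1} \preceq T$ at each of the rows $r_1+1,\ldots,r_2$ in column $c$, together with the fact that ghost cells persist (so $\langle r_1,c\rangle \in T_{i-1}$), we see that in $T_{i-1}$ every position in column $c$ from row $r_1$ up to row $r_2$ is occupied. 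If there is any empty position in column $c$ below $r_2$ in $T_{i-1}$, its highest such row $\hat r$ must satisfy $\hat r < r_1$, and the triple $(\hat r, r_1, r_2)$ fits the third ``do-nothing'' bullet in the definition of a ghost move (with $r^* = r_1$); if there is no such empty position, the second ``do-nothing'' bullet applies. Either way, the ghost move at row $r_2$ cannot actually move the cell at $(r_2,c)$, contradicting the choice of $i$.

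The main obstacle is in part (b): one must apply (a) at the intermediate diagram $T_{i-1}$ rather than at $\tilde T$, and notice that the ghost cell $\langle r_1,c\rangle$ survives into $T_{i-1}$ as well. Once these two persistence facts are set up, the explicit ``do-nothing'' clauses in the definition of a ghost move close the argument without further calculation.
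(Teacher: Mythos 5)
Your proposal is correct and follows essentially the same route as the paper, which disposes of part (a) as immediate from the definition and of part (b) by the observation that the ghost cell $\langle r_1,c\rangle$ sits between the cells $(\tilde r,c)$, $r_1<\tilde r\le r_2$, and any empty position below, so those cells can never move. Your version simply makes the paper's terse argument explicit — induction on the length of the move sequence for (a), and for (b) a minimal-counterexample argument pinpointing which ``do-nothing'' clause of the ghost-move definition is triggered.
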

\begin{proof}
    (a) Follows immediately from the definition of ghost move.
    
    (b) Considering the definition of ghost move, since the ghost cell $\langle r_1,c\rangle\in T$ lies between the cells $(\tilde{r},c)\in T$ for $r_1<\tilde{r}\le r_2$ and any empty position below, it follows that for all $\tilde{T}\prec T$, we have $\langle r_1,c\rangle,(\tilde{r},c)\in \tilde{T}$ for $r_1<\tilde{r}\le r_2$. In particular, $(r_2,c)\in \tilde{T}$ for all $\tilde{T}\prec T$.
\end{proof}

% Assume otherwise. Then there must exist $T_1,T_2\in \mathcal{P}_G(D)$ such that 
%     \begin{itemize}
%         \item $T_2\precdot T_1\prec T$,
%         \item $\langle r_1,c\rangle,(\tilde{r},c)\in T_1$ for all $r_1<\tilde{r}\le r_2$, and
%         \item $\langle r_1,c\rangle,\langle r^*,c\rangle\in T_2$ for some $r^*$ satisfying $r_1<r^*\le r_2$;
%     \end{itemize}
%     note that $T_2=\mathcal{G}(T_1,r^*)$. Since ghost moves cannot cause a non-ghost cell to move from above a ghost cell to below a ghost cell, $T_1\neq T_2$, and $\langle r_1,c\rangle,(\tilde{r},c)\in T_1$ for $r_1<\tilde{r}\le r_2$, the rightmost cell in row $r^*$ of $T_1$ must occupy a column $c^*>c$; but this implies that $(r^*,c)\in T_2$, which is a contradiction. The result follows.

\begin{lemma}\label{lem:meethelp}
    Let $D$ be a diagram with $|G(D)|=0$.
    Suppose that $D_0,D_1,D_2,D_3\in \mathrm{GKD}(D)$ satisfy $D_3\precdot D_i\precdot D_0$ for $i=1,2$. Then there exists $r_1,r_2,c_1,c_2,r_1^*,r_2^*\in\mathbb{N}$ for which $r_1\neq r_2$, $$D_1=\mathcal{G}(D_0,r_1)=D_0\ldownarrow^{(r_1,c_1)}_{(r^*_1,c_1)}\cup\{\langle r_1,c_1\rangle\},\quad\quad D_2=\mathcal{G}(D_0,r_2)=D_0\ldownarrow^{(r_2,c_2)}_{(r^*_2,c_2)}\cup\{\langle r_2,c_2\rangle\},$$ and $$D_3=\mathcal{G}(D_1,r_2)=D_1\ldownarrow^{(r_2,c_2)}_{(r^*_2,c_2)}\cup\{\langle r_2,c_2\rangle\}=\mathcal{G}(D_2,r_1)=D_2\ldownarrow^{(r_1,c_1)}_{(r^*_1,c_1)}\cup\{\langle r_1,c_1\rangle\}.$$
\end{lemma}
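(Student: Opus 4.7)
The plan is to use Corollary~\ref{cor:cover} to encode all four covering relations as single ghost moves, and then to compare the data appearing in the two descents $D_0\to D_1\to D_3$ and $D_0\to D_2\to D_3$ through a two-stage matching: first matching the two new ghost cells, then matching the two moved non-ghost cells.

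First I would apply Corollary~\ref{cor:cover} to $D_1, D_2 \precdot D_0$ to obtain rows $r_1, r_2$ with $D_i = \mathcal{G}(D_0, r_i)$; letting $(r_i, c_i)$ denote the rightmost cell of row $r_i$ of $D_0$ and $(r_i^*, c_i)$ its destination, this immediately yields the asserted expressions for $D_1$ and $D_2$. Applying the corollary to $D_3 \precdot D_i$ similarly produces rows $s_1, s_2$ together with cells $(s_i, c_i')\in D_i$ and destinations $(\tilde{s}_i, c_i')$ for which $D_3 = \mathcal{G}(D_i, s_i)$. By Theorem~\ref{thm:ranked}, $|G(D_3)| = |G(D_0)|+2$, so reading off the pair of new ghost cells along each descent gives
\[
G(D_3)\setminus G(D_0) \;=\; \{\langle r_1, c_1\rangle,\langle s_1, c_1'\rangle\} \;=\; \{\langle r_2, c_2\rangle,\langle s_2, c_2'\rangle\}.
\]
This common two-element set admits only two pairings. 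The parallel pairing $(r_1, c_1) = (r_2, c_2)$ yields $r_1 = r_2$, and by the determinism of $\mathcal{G}(D_0, \cdot)$ this collapses the diamond to $D_1 = D_2$; in the meaningful setting $D_1\neq D_2$ we are left with the crossed pairing $(r_1, c_1) = (s_2, c_2')$ together with $(s_1, c_1') = (r_2, c_2)$, from which $s_1 = r_2$, $c_1' = c_2$, $s_2 = r_1$, $c_2' = c_1$, and uniqueness of the rightmost cell in a single row of $D_0$ forces $r_1\neq r_2$.

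It remains to verify that the destinations also agree, i.e., $\tilde{s}_1 = r_2^*$ and $\tilde{s}_2 = r_1^*$. Writing $D_3$ as a set along each descent and comparing the two newly introduced non-ghost cells yields
\[
\{(r_1^*, c_1),(\tilde{s}_1, c_2)\} \;=\; \{(r_2^*, c_2),(\tilde{s}_2, c_1)\}.
\]
When $c_1 \neq c_2$, separation by column forces $\tilde{s}_2 = r_1^*$ and $\tilde{s}_1 = r_2^*$ at once. The main obstacle is the case $c_1 = c_2$, where the swapped sub-pairing $r_1^* = r_2^*$ (with $\tilde{s}_1 = \tilde{s}_2$) is formally possible. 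To rule it out, I would assume without loss of generality $r_1 > r_2$ and observe that $c_1 = c_2$ together with $r_1^* = r_2^*$ forces every position in column $c_1$ strictly between $r_1^*$ and $r_1$ (other than $r_1^*$) to be occupied in $D_0$. Analyzing column $c_1$ of $D_2$, the ghost $\langle r_2, c_2\rangle$, the freshly filled $(r_2^*, c_1) = (r_1^*, c_1)$, and the preserved occupancy either (a) trigger the blocking-by-ghost condition from Section~\ref{sec:dagm}, so that $\mathcal{G}(D_2, r_1) = D_2$, contradicting $D_3\precdot D_2$, or (b) leave no empty position below row $r_2$ in column $c_1$ of $D_1$, so that $\mathcal{G}(D_1, r_2) = D_1$, contradicting $D_3\precdot D_1$. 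Either way, the swapped sub-pairing is excluded, yielding $\tilde{s}_1 = r_2^*$ and $\tilde{s}_2 = r_1^*$ and completing the lemma.
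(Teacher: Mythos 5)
Your proof is correct and follows essentially the same route as the paper's: both arguments first match the newly created ghost cells to force the second move out of $D_1$ to occur at row $r_2$ in column $c_2$ (and symmetrically for $D_2$), and both then rule out a mismatch of destination rows by reducing to the case $c_1=c_2$, $r_1^*=r_2^*$ and deriving a contradiction from the third bullet of the $K$-Kohnert move definition (a ghost cell sitting between the rightmost cell and the highest empty position below, so that the required covering move would be trivial). Your set-difference bookkeeping for the new ghost and non-ghost cells is a cleaner, more symmetric packaging of the occupancy analysis the paper carries out inside its single contradiction chain, but the underlying ideas coincide.
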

\begin{proof}
    Since $D_1,D_2\precdot D_0$, there exists $r_1,r_2,c_1,c_2,r_1^*,r_2^*\in\mathbb{N}$ for which $r_1\neq r_2$, $$D_1=\mathcal{G}(D_0,r_1)=D_0\ldownarrow^{(r_1,c_1)}_{(r^*_1,c_1)}\cup\{\langle r_1,c_1\rangle\},\quad\text{and}\quad D_2=\mathcal{G}(D_0,r_2)=D_0\ldownarrow^{(r_2,c_2)}_{(r^*_2,c_2)}\cup\{\langle r_2,c_2\rangle\};$$ note that, as a consequence, for $i=1$ or 2, we have that $(\tilde{r},c_i)\in D_0$ for $r_i^*<\tilde{r}\le r_i$. Now, since $D_3\precdot D_1,D_2$, it follows that $\langle r_1,c_1\rangle,\langle r_2,c_2\rangle\in D_3$. Consequently, $$D_3=\mathcal{G}(D_1,r_2)=D_1\ldownarrow^{(r_2,c_2)}_{(\widehat{r}_2,c_2)}\cup\{\langle r_2,c_2\rangle\}=\mathcal{G}(D_2,r_1)=D_2\ldownarrow^{(r_1,c_1)}_{(\widehat{r}_1,c_1)}\cup\{\langle r_1,c_1\rangle\};$$ note that this implies that $(r_1,c_1)$ (resp., $(r_2,c_2)$) is rightmost in row $r_1$ (resp., $r_2$) of $D_2$ (resp., $D_1$). It remains to show that $\widehat{r}_1=r_1^*$ and $\widehat{r}_2=r_2^*$. To see this, assume that $\widehat{r}_1\neq r_1^*$, the other case following via a symmetric argument. Since $(\tilde{r},c_1)\in D_0$ for $r_1^*< \tilde{r}\le r_1$ and $D_2\precdot D_0$, applying Lemma~\ref{lem:block} (a), it follows that $(\tilde{r},c_1)$ or $\langle \tilde{r},c_1\rangle\in D_2$ for all $r_1^*<\tilde{r}\le r_1$. Considering Lemma~\ref{lem:block} (b) and the fact that $\mathcal{G}(D_2,r_1)=D_3\neq D_2$, it must be the case that $(\tilde{r},c_1)\in D_2$ for all $r_1^*<\tilde{r}\le r_1$. Moreover, since $r_1^*\neq\widehat{r}_1$, it must also be the case that $(r_1^*,c_1)\in D_2$. Thus, as $$D_2=\mathcal{G}(D_0,r_2)=D_0\ldownarrow^{(r_2,c_2)}_{(r^*_2,c_2)}\cup\{\langle r_2,c_2\rangle\}$$ and position $(r_1^*,c_1)$ is empty in $D_0$, it follows that $c_1=c_2$, $r_1^*=r_2^*$; that is, $$D_2=\mathcal{G}(D_0,r_2)=D_0\ldownarrow^{(r_2,c_1)}_{(r^*_1,c_1)}\cup\{\langle r_2,c_1\rangle\}.$$ In addition, since $(\tilde{r},c_1)\in D_2$ for all $r_1^*<\tilde{r}\le r_1$, it must be the case that $r_2>r_1$. Consequently, $(\tilde{r},c_1)\in D_0$ for all $r_1^*<\tilde{r}\le r_2$; but this implies that $\langle r_1,c_1\rangle,(\tilde{r},c_1)\in D_1$ for all $r_1<\tilde{r}\le r_2$. Since $(r_2,c_1)$ is rightmost in row $r_2$ of $D_1$, as noted above, it follows that $\mathcal{G}(D_1,r_2)=D_1\neq D_3$, which is a contradiction. Therefore, $r_1^*=\widehat{r}_1$ and the result follows.
\end{proof}

\begin{prop}\label{prop:meet}
    Let $D$ be a diagram with $|G(D)|=0$.
    If $D_0,D_1,D_2,D_3\in \mathrm{GKD}(D)$ satisfy $D_3\precdot D_i\precdot D_0$ for $i=1,2$, then $D_3=D_1\wedge D_2$.
\end{prop}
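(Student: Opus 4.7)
The plan is to invoke the explicit description from Lemma~\ref{lem:meethelp}---which supplies rows $r_1 \neq r_2$, columns $c_1, c_2$, and positions $r_1^*, r_2^*$ describing $D_1, D_2, D_3$ in terms of $D_0$---and then to show every common lower bound $T$ of $D_1$ and $D_2$ in $\mathcal{P}_G(D)$ satisfies $T \preceq D_3$. Since $D_3$ is already a common lower bound by hypothesis, this establishes $D_3 = D_1 \wedge D_2$. First, I would observe that, because ghost cells are never destroyed by ghost moves, $T \preceq D_1$ forces $G(D_1) \subseteq G(T)$ and $T \preceq D_2$ forces $G(D_2) \subseteq G(T)$; combining these gives $G(T) \supseteq G(D_1) \cup G(D_2) = G(D_3)$, and in particular $\langle r_2, c_2 \rangle \in G(T)$.

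The core of the argument is the following stronger statement, to be proved by induction on $n = |G(U)| - |G(D_1)| \ge 1$: \emph{if $U \preceq D_1$ and $\langle r_2, c_2 \rangle \in G(U)$, then $U \preceq D_3$}. Applying this with $U = T$ then yields the proposition. For the base case $n = 1$ we have $U \precdot D_1$, so Corollary~\ref{cor:cover} gives $U = \mathcal{G}(D_1, r)$ for some $r$, and the only new ghost cell $\langle r_2, c_2 \rangle$ pins down $r = r_2$, so $U = \mathcal{G}(D_1, r_2) = D_3$.

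For the inductive step $n \ge 2$, I would fix a saturated chain $U = E_n \precdot E_{n-1} \precdot \cdots \precdot E_0 = D_1$ and split into two cases. If some intermediate $E_j$ with $0 < j < n$ already contains $\langle r_2, c_2 \rangle$, then $E_j \preceq D_1$ and $|G(E_j)| - |G(D_1)| = j < n$, so the inductive hypothesis gives $E_j \preceq D_3$, whence $U \preceq E_j \preceq D_3$. Otherwise, only the final step creates $\langle r_2, c_2 \rangle$, that is, $U = \mathcal{G}(E_{n-1}, r_2)$ and $(r_2, c_2)$ is rightmost in row $r_2$ of $E_{n-1}$. In this remaining case, my aim is to show that the move at row $r_2$ can be pulled to the front of the chain: concretely, that $D_3 = \mathcal{G}(D_1, r_2)$ can be taken as the first step and the remaining $n-1$ moves, applied from $D_3$ rather than $D_1$, still drive the diagram down to $U$; induction applied to this shorter chain then gives $U \preceq D_3$.

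The main obstacle is justifying this reorganization, since ghost moves do not commute in general. The leverage should come from the hard-case hypothesis $\langle r_2, c_2 \rangle \notin E_j$ for every $j < n$: a tracking argument in the spirit of Lemma~\ref{lem:block} should show that $(r_2, c_2)$ is the rightmost cell in row $r_2$ of \emph{every} $E_j$ (otherwise some intermediate $E_j$ would acquire a ghost in row $r_2$ at a column $\ge c_2$), and that no intermediate move alters column $c_2$ between rows $r_2^*$ and $r_2$ in a way that interferes with the deferred move. These constraints should suffice to prove the stepwise commutation $\mathcal{G}(\mathcal{G}(E_j, s_j), r_2) = \mathcal{G}(\mathcal{G}(E_j, r_2), s_j)$ needed to reorganize the chain. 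If the direct commutation turns out to be delicate, the symmetric chain from $D_2$ down to $T$ (using $\langle r_1, c_1 \rangle \in G(T)$) provides an alternative route, since at least one of the two chains must admit the front-loading in question.
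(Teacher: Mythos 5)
Your overall architecture matches the paper's: invoke Lemma~\ref{lem:meethelp}, take an arbitrary common lower bound, locate the first moment the ghost cell $\langle r_2,c_2\rangle$ is created along a chain descending from $D_1$, and argue that this move can be front-loaded so that the chain factors through $D_3$. However, the inductive statement you chose to carry this out --- ``if $U\preceq D_1$ and $\langle r_2,c_2\rangle\in G(U)$, then $U\preceq D_3$'' --- is false, because it discards the hypothesis $U\preceq D_2$. Concretely, take $D_0=\{(3,1),(4,1),(6,2)\}$, $r_1=6$, $r_2=3$, so $D_1=\mathcal{G}(D_0,6)$, $D_2=\mathcal{G}(D_0,3)$, and $D_3=\mathcal{G}(D_1,3)=\mathcal{G}(D_2,6)$ satisfy $D_3\precdot D_i\precdot D_0$. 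Now from $D_1$ apply a ghost move at row $4$ (sending $(4,1)$ to $(2,1)=(r_2^*,c_2)$) and then at row $3$, producing $U=\{(1,1),(2,1),(5,2),\langle 3,1\rangle,\langle 4,1\rangle,\langle 6,2\rangle\}$. Then $U\preceq D_1$ and $\langle r_2,c_2\rangle=\langle 3,1\rangle\in U$, yet $U\not\preceq D_3$: in $D_3$ the non-ghost cell $(4,1)$ sits directly above the ghost $\langle 3,1\rangle$, so by Lemma~\ref{lem:block}(b) it persists in every diagram below $D_3$, whereas $U$ contains $\langle 4,1\rangle$ instead. This is exactly the interference you flagged as the ``main obstacle'': an intermediate move in column $c_2$ fills the position $(r_2^*,c_2)$, so the deferred move at row $r_2$ lands lower than it does in $D_1$, and the commutation $\mathcal{G}(\mathcal{G}(E_j,s_j),r_2)=\mathcal{G}(\mathcal{G}(E_j,r_2),s_j)$ genuinely fails. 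The hypothesis $\langle r_2,c_2\rangle\notin E_j$ for $j<n$ gives no control over this, and the symmetric fallback chain from $D_2$ does not help, since the same obstruction occurs there.

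The missing ingredient is that the constraint $U\preceq D_2$ must be used to control column $c_2$. The paper does this with a counting argument: since $\langle r_2,c_2\rangle\in D_2$ and ghost cells block passage, the number of cells weakly below row $r_2$ in column $c_2$ is the same in $D_0$, in $D_2$, and in every diagram below $D_2$; if the cell leaving $(r_2,c_2)$ were to land strictly below $(r_2^*,c_2)$ along the chain from $D_1$, that count would strictly exceed its value in $D_0$, contradicting $U\preceq D_2$. (In the example above, $U$ has three cells weakly below row $3$ in column $1$, versus two in $D_2$, which is why $U$ fails to be a common lower bound and the proposition survives.) Only after this is established does the step-by-step comparison of the two chains (from $D_1$ and from $D_3$) go through, and even there the paper again invokes $\widehat{D}\preceq D_2$ to rule out interference from the positions $(r_2,c_2)$ and $(r_2^*,c_2)$. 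So your proof as proposed has a genuine gap: the induction must be reformulated to keep $U\preceq D_2$ as a standing hypothesis, and the front-loading step needs the column-$c_2$ cell count argument rather than a tracking argument based solely on the absence of $\langle r_2,c_2\rangle$ in the intermediate diagrams.
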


\begin{proof}
    Since $D_3\precdot D_1,D_2$, we have that $D_3$ forms a lower bound of $D_1,D_2$. To show that $D_3=D_1\wedge D_2$, i.e., $D_3$ is the greatest lower bound of $D_1$ and $D_2$, we need to show that if $\tilde{D}\in \mathcal{P}_G(D)$ satisfies $\tilde{D}\prec D_1,D_2$, then $\tilde{D}\preceq D_3$. Applying Lemma~\ref{lem:meethelp}, we have that there exists $r_1,r_2,c_1,c_2,r_1^*,r_2^*\in\mathbb{N}$ for which $r_1\neq r_2$, $$D_1=\mathcal{G}(D_0,r_1)=D_0\ldownarrow^{(r_1,c_1)}_{(r^*_1,c_1)}\cup\{\langle r_1,c_1\rangle\},\quad\quad D_2=\mathcal{G}(D_0,r_2)=D_0\ldownarrow^{(r_2,c_2)}_{(r^*_2,c_2)}\cup\{\langle r_2,c_2\rangle\},$$ and $$D_3=\mathcal{G}(D_1,r_2)=D_1\ldownarrow^{(r_2,c_2)}_{(r^*_2,c_2)}\cup\{\langle r_2,c_2\rangle\}=\mathcal{G}(D_2,r_1)=D_2\ldownarrow^{(r_1,c_1)}_{(r^*_1,c_1)}\cup\{\langle r_1,c_1\rangle\}.$$ Note that as a consequence, we have that $(\tilde{r},c_2)\in D_1$ for $r_2^*<\tilde{r}\le r_2$.

Take $\widehat{D}\in\mathcal{P}_G(D)$ such that $\widehat{D}\prec D_1,D_2$. We need to show that $\widehat{D}\preceq D_3$. Since $\widehat{D}\prec D_1$, there exists a sequence of rows $\{\widehat{r}_i\}_{i=1}^n$ such that if $T_0=D_1$ and $T_i=\mathcal{G}(T_{i-1},\widehat{r}_i)$ for $i\in [n]$, then $T_n=\widehat{D}$. Now, since $\widehat{D}\prec D_2$, it follows that $\langle r_2,c_2\rangle\in\widehat{D}$. Consequently, since $\langle r_2,c_2\rangle\notin D_1$, there exists $j\in [n]$ such that $\widehat{r}_j=r_2$, $\langle r_2,c_2\rangle\notin T_{j-1}$, and $\langle r_2,c_2\rangle\in T_j$. We claim that $T_j\preceq D_3$. Note that, since $\widehat{D}\preceq T_j$, if we can establish the claim, then it will follow that $\widehat{D}\preceq D_3$, as desired. As a first step towards establishing the claim, we show that $T_j=T_{j-1}\ldownarrow^{(r_2,c_2)}_{(r_2^*,c_2)}\cup\{\langle r_2,c_2\rangle\}$; that is, applying a ghost move at row $r_2$ of $T_{j-1}$ has the same effect as applying a ghost move at row $r_2$ of $D_1$. Assume otherwise, i.e., 
\begin{equation}\label{eq:joinsemi1}
    T_j= T_{j-1}\ldownarrow^{(r_2,c_2)}_{(r'_2,c_2)}\cup\{\langle r_2,c_2\rangle\}
\end{equation}
for $r'_2\neq r_2^*$. Considering \eqref{eq:joinsemi1} and the fact that $(\tilde{r},c_2)\in D_1$ for $r_2^*<\tilde{r}\le r_2$, applying Lemma~\ref{lem:block} (a), it follows that $(\tilde{r},c_2)\in T_{j-1}$ for $r_2^*<\tilde{r}\le r_2$. Moreover, since we are assuming that $T_j\neq T_{j-1}\ldownarrow^{(r_2,c_2)}_{(r_2^*,c_2)}\cup\{\langle r_2,c_2\rangle\}$, it must be the case that $r_2'<r_2^*$ and $(r_2^*,c_2)\in T_{j-1}$. Thus, since the effect of ghost moves on non-ghost cells is to cause them move to lower rows within the same column, this implies that $$|\{(r,c_2)\in D_1~|~r\le r_2\}|<|\{(r,c_2)\in T_{j-1}~|~r\le r_2\}|\le |\{(r,c_2)\in \widehat{D}~|~r\le r_2\}|,$$ i.e., there are strictly more cells weakly below row $r_2$ in column $c_2$ of $\widehat{D}$ than there are weakly below row $r_2$ in column $c_2$ of $D_1$. However, as $D_2=(D_0\backslash\{(r_2,c_2)\})\cup\{\langle r_2,c_2\rangle, (r_2^*,c_2)\}$ and $\widehat{D}\prec D_2$, we have that $$|\{(r,c_2)\in D_0~|~r\le r_2\}|=|\{(r,c_2)\in D_2~|~r\le r_2\}|=|\{(r,c_2)\in \widehat{D}~|~r\le r_2\}|,$$ where the last equality follows since ghost moves cannot cause a non-ghost cell to move from above a ghost cell to below a ghost cell; but, since $D_1=(D_0\backslash\{(r_1,c_2)\})\cup\{\langle r_1,c_1\rangle,(r_1^*,c_1)\}$, we must also have that $$|\{(r,c_2)\in D_0~|~r\le r_2\}|\le|\{(r,c_2)\in D_1~|~r\le r_2\}|$$ so that $$|\{(r,c_2)\in \widehat{D}~|~r\le r_2\}|\le |\{(r,c_2)\in D_1~|~r\le r_2\}|<|\{(r,c_2)\in \widehat{D}~|~r\le r_2\}|,$$ which is a contradiction. Thus, $T_j=T_{j-1}\ldownarrow^{(r_2,c_2)}_{(r_2^*,c_2)}\cup\{\langle r_2,c_2\rangle\}$, as desired. Note that, considering Lemma~\ref{lem:block} (a), it must be the case that $(r^*_2,c_2),\langle r_2^*,c_2\rangle\notin T_i$ for $i\in [j-1]$.

Now, setting $T'_0=D_3$ and $T'_i=\mathcal{G}(T'_{i-1},\widehat{r}_i)$ for $i\in [j-1]$, we show that $T_i\backslash T_{i-1}=T'_i\backslash T'_{i-1}$, i.e., applying a ghost move at row $\widehat{r}_i$ of $T_{i-1}$ has the same effect as applying a ghost move at row $\widehat{r}_i$ of $T'_{i-1}$. Since $D_3\backslash D_2=T_j\backslash T_{j-1}$ as shown above, it will then follow that $T'_{j-1}=T_j$ and, consequently, $T_j\preceq D_3$, as claimed. Assume otherwise. Then there exists a least $j^*\in [j-1]$ such that $T_{j^*}\backslash T_{j^*-1}\neq T'_{j^*}\backslash T'_{j^*-1}$. Since $T_i\backslash T_{i-1}=T'_i\backslash T'_{i-1}$ for $i\in [j^*-1]$ and the diagrams $T'_0$ and $T_0$ differ only in positions $(r^*_2,c_2)$ and $(r_2,c_2)$ with $(r_2,c_2)\in T_0$, $(r_2^*,c_2),\langle r_2,c_2\rangle\notin T_0$, and $(r_2^*,c_2),\langle r_2,c_2\rangle\in T'_0$, it follows that $T_{j^*-1}$ and $T'_{j^*-1}$ differ only in positions $(r^*_2,c_2)$ and $(r_2,c_2)$ with $(r_2,c_2)\in T_{j^*-1}$, $(r^*_2,c_2),\langle r_2,c_2\rangle\notin T_{j^*-1}$, and $(r_2^*,c_2),\langle r_2,c_2\rangle\in T'_{j^*-1}$. Now, suppose that 
    \begin{equation}\label{eq:joinsemi}
        T_{j^*}=\mathcal{G}(T_{j^*-1},\widehat{r}_{j^*})=T_{j^*-1}\ldownarrow^{(\widehat{r}_{j^*},c_{j^*})}_{(r'_{j^*},c_{j^*})}\cup\{\langle \widehat{r}_{j^*},c_{j^*}\rangle\};
    \end{equation}
    note that, since $\widehat{D}\preceq T_{j^*}$, it follows that $\langle \widehat{r}_{j^*},c_{j^*}\rangle\in \widehat{D}$. Since $T_{j^*}\backslash T_{j^*-1}\neq T'_{j^*}\backslash T'_{j^*-1}$, one of the following holds
    \begin{enumerate}
        \item[$(1)$] $\widehat{r}_{j^*}=r_2$ and $c_2>c_{j^*}$, i.e., $\langle r_2,c_2\rangle$ lies to the right of $(\widehat{r}_{j^*},c_{j^*})$ in $T'_{j^*-1}$,
        \item[$(2)$] $\widehat{r}_{j^*}=r_2^*$ and $c_2>c_{j^*}$, i.e., $(r^*_2,c_2)$ lies to the right of $(\widehat{r}_{j^*},c_{j^*})$ in $T'_{j^*-1}$, or
        \item[$(3)$] $(r_2,c_2)$ or $(r_2^*,c_2)=(r'_{j^*},c_{j^*})$, i.e., $(r'_{j^*},c_{j^*})$ is not an empty position in $T'_{j^*-1}$.
    \end{enumerate}
    For (1), note that since $\langle r_2,c_2\rangle\in D_2$, it follows that $\langle r_2,c_2\rangle\in \tilde{D}$ for all $\tilde{D}\preceq D_2$. Moreover, since $\langle r_2,c_{j^*}\rangle=\langle \widehat{r}_{j^*},c_{j^*}\rangle\notin T_{j^*-1}\preceq D_0$ and $D_2\backslash D_0=\{\langle r_2,c_2\rangle,(r_2^*,c_2)\}$, we have that $\langle r_2,c_{j^*}\rangle\notin D_2$. Consequently, since ghost moves cannot introduce ghost cells to the right of an existing ghost cell, $\langle r_2,c_{j^*}\rangle\notin\tilde{D}$ for all $\tilde{D}\preceq D_2$; but $\widehat{D}\preceq D_2$ and, as noted above, $\langle r_2,c_{j^*}\rangle\in \widehat{D}$, which is a contradiction. Moving to (2), the argument is similar to (1) as, applying Lemma~\ref{lem:block} (a), $(r_2^*,c_2)\in D_2$ implies that $(r_2^*,c_2)$ or $\langle r_2^*,c_2\rangle\in \tilde{D}$ for all $\tilde{D}\preceq D_2$. Finally, for (3), note that neither case is possible:
    \begin{itemize}
        \item since $(r_2,c_2)\in T_{j^*-1}$ and position $(r_{j^*},c_{j^*})$ must be empty in $T_{j^*-1}$ considering \eqref{eq:joinsemi}, it follows that $(r'_{j^*},c_{j^*})\neq (r_2,c_2)$; and
        \item since $(r_2^*,c_2),\langle r_2^*,c_2\rangle\notin T_i$ for $i\in [j-1]$, as noted above, with $j^*-1<j-1$, it follows that $(r'_{j^*},c_{j^*})\neq (r_2^*,c_2)$.
    \end{itemize}
    Consequently, we have that $T_i\backslash T_{i-1}=T'_i\backslash T'_{i-1}$ for $i\in [j-1]$. Therefore, as noted above, $\widehat{D}\preceq T_j\preceq D_3$ and the result follows.
\end{proof}

Proposition~\ref{prop:meet} in hand, we are now in a position to establish the main result of this subsection.

\begin{theorem}\label{thm:jsl}
    For any diagram $D$ with $|G(D)|=0$, the poset $\mathcal{P}_G(D)$ is a join-semilattice.
\end{theorem}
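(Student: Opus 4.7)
The plan is to verify the hypotheses of Lemma~\ref{lem:nathan}. For the unique maximum, observe that every element of $\mathrm{GKD}(D)\setminus\{D\}$ is obtained from $D$ by a nontrivial sequence of ghost moves and so has at least one ghost cell; since $|G(D)|=0$, Theorem~\ref{thm:ranked} (or the definition of the order directly) places each such element strictly below $D$. Hence $D$ is the unique maximal element of $\mathcal{P}_G(D)$.

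The substance of the proof is then to show: whenever $D_1,D_2\in\mathcal{P}_G(D)$ are both covered by a common element $D_0$ and admit a common lower bound $\tilde{D}$, the meet $D_1\wedge D_2$ exists. Proposition~\ref{prop:meet} delivers this as soon as we exhibit some $D_3$ with $D_3\precdot D_1$ and $D_3\precdot D_2$, so the task reduces to constructing such a $D_3$.

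By Corollary~\ref{cor:cover}, write $D_1=\mathcal{G}(D_0,r_1)=D_0\ldownarrow^{(r_1,c_1)}_{(r_1^*,c_1)}\cup\{\langle r_1,c_1\rangle\}$ and $D_2=\mathcal{G}(D_0,r_2)=D_0\ldownarrow^{(r_2,c_2)}_{(r_2^*,c_2)}\cup\{\langle r_2,c_2\rangle\}$ with $r_1\neq r_2$. The natural candidate is $D_3:=\mathcal{G}(D_1,r_2)$, and the plan is to verify that this ghost move is nontrivial, sends $(r_2,c_2)$ to $(r_2^*,c_2)$, and coincides with $\mathcal{G}(D_2,r_1)$. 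Nontriviality is forced by the common lower bound: since $\tilde{D}\preceq D_2$, the ghost cell $\langle r_2,c_2\rangle$ lies in $\tilde{D}$; but $\langle r_2,c_2\rangle\notin D_1$ (the only ghost in $D_1$ is $\langle r_1,c_1\rangle$, and $r_1\neq r_2$), so along any ghost-move path from $D_1$ down to $\tilde{D}$ there is a first step at which $\langle r_2,c_2\rangle$ appears. By Lemma~\ref{lem:block}(a), just before that step the non-ghost cell $(r_2,c_2)$ is still present and is still the rightmost cell in its row, and the same must then hold in $D_1$ itself. To pin the landing row down to $(r_2^*,c_2)$ we argue by cases: if $c_1\neq c_2$ the two ghost moves act on disjoint columns and the equality is immediate; if $c_1=c_2$, a cell-counting argument in column $c_2$ weakly below row $r_2$—modelled on the $|\{(r,c_2)\in\cdot\}|$ inequalities in the proof of Proposition~\ref{prop:meet} and exploiting $\tilde{D}\preceq D_2$—rules out any alternative landing row.

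A symmetric analysis shows $\mathcal{G}(D_2,r_1)=D_3$, so by Corollary~\ref{cor:cover} we obtain $D_3\precdot D_1$ and $D_3\precdot D_2$. Proposition~\ref{prop:meet} applied to the configuration $D_3\precdot D_i\precdot D_0$ then gives $D_1\wedge D_2=D_3$, completing the hypotheses of Lemma~\ref{lem:nathan} and hence the theorem. The main obstacle is exactly the landing-row analysis in the case $c_1=c_2$: a priori the displacement $(r_1,c_1)\to(r_1^*,c_1)$ performed to form $D_1$ could change the highest empty position below $(r_2,c_2)$ in column $c_2$, and it is the hypothesis that $D_1$ and $D_2$ share a lower bound that prevents this pathology.
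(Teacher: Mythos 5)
Your proposal is correct and follows essentially the same route as the paper: reduce to Lemma~\ref{lem:nathan}, exhibit the diamond completion $D_3=\mathcal{G}(D_1,r_2)=\mathcal{G}(D_2,r_1)$ for a covered pair, and invoke Corollary~\ref{cor:cover} and Proposition~\ref{prop:meet}. The only difference is organizational: where the paper runs five geometric cases and shows directly that the two obstructing configurations ($c_1=c_2$ with no empty position between the two rows, and $c_1>c_2$ with $r_1^*=r_2$) admit no common lower bound via Lemma~\ref{lem:block}, you instead assume a common lower bound and derive contradictions in exactly those configurations, which is the contrapositive of the same argument.
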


\begin{proof}
Let $D$ be a diagram with $|G(D)|=0$. Suppose that $D_0,D_1,D_2\in \mathcal{P}_G(D)$ are distinct diagrams for which $D_1,D_2\precdot D_0$ and $$D_i = \mathcal{G}(D_0, r_i)=D_i\ldownarrow^{(r_i,c_i)}_{(r^*_i,c_i)}\cup\{\langle r_i,c_i\rangle\}$$ for $i=1,2$. Since $D_1\neq D_2$, it follows that $r_1\neq r_2$. Without loss of generality, assume that $r_1>r_2$. Considering Lemma~\ref{lem:nathan}, to establish the present result, it suffices to show that either 
\begin{enumerate}
    \item[$(a)$] $D_1$ and $D_2$ have a meet or
    \item[$(b)$] $D_1$ and $D_2$ have no common lower bound.
\end{enumerate}
There are five cases. For the diagrams illustrating the possible cases in Figure~\ref{fig:semilattice1}, regions shaded with diagonal lines represent regions in which no (initial) assumptions are made regarding the placement of cells.
\bigskip

\begin{figure}[H]
    \centering
    $$\scalebox{0.8}{\begin{tikzpicture}[scale=0.65]
    \node at (2.5, 2.5) {$\cdot$};
    \node at (2.5, 5.5) {$\cdot$};
  \draw (4.5,0.5)--(0.5,0.5)--(0.5,7.5);
  \draw (2,2)--(3,2)--(3,3)--(2,3)--(2,2);
  \draw (2,5)--(3,5)--(3,6)--(2,6)--(2,5);
    \path[pattern=north west lines] (0.5,0.5)--(4,0.5) -- (4,2) -- (2,2) -- (2,3) -- (4,3) -- (4,5)--(2,5)--(2,6)--(4,6)--(4,7)--(0.5,7)--(0.5,0.5)-- cycle;
  \node at (2.5, -0.25) {\large $c$};
  \node at (-0.25, 2.5) {\large $r_2$};
  \node at (-0.25, 5.5) {\large $r_1$};
  \node at (2, -1.5) {\large $(a)$};
\end{tikzpicture}}\quad\quad\quad\quad \scalebox{0.8}{\begin{tikzpicture}[scale=0.65]
    \node at (2.5, 2.5) {$\cdot$};
    \node at (5, 5.5) {$\cdot$};
  \draw (7,0.5)--(0.5,0.5)--(0.5,7.5);
  \draw (2,2)--(3,2)--(3,3)--(2,3)--(2,2);
  \draw (4.5,5)--(5.5,5)--(5.5,6)--(4.5,6)--(4.5,5);
    \path[pattern=north west lines] (0.5,0.5)--(6.5,0.5) -- (6.5,2) -- (2,2) -- (2,3) -- (6.5,3) -- (6.5,5)--(4.5,5)--(4.5,6)--(6.5,6)--(6.5,7)--(0.5,7)--(0.5,0.5)-- cycle;
  \node at (2.5, -0.25) {\large $c_2$};
  \node at (5, -0.25) {\large $c_1$};
  \node at (-0.25, 2.5) {\large $r_2$};
  \node at (-0.25, 5.5) {\large $r_1$};
  \node at (3.5, -1.5) {\large $(b)$};
\end{tikzpicture}}\quad\quad\quad\quad \scalebox{0.8}{\begin{tikzpicture}[scale=0.65]
    \node at (2.5, 5.5) {$\cdot$};
    \node at (5, 2.5) {$\cdot$};
  \draw (7,0.5)--(0.5,0.5)--(0.5,7.5);
  \draw (2,5)--(3,5)--(3,6)--(2,6)--(2,5);
  \draw (4.5,2)--(5.5,2)--(5.5,3)--(4.5,3)--(4.5,2);
    \path[pattern=north west lines] (0.5,0.5)--(6.5,0.5) -- (6.5,2) -- (4.5,2) -- (4.5,3) -- (6.5,3) -- (6.5,5)--(2,5)--(2,6)--(6.5,6)--(6.5,7)--(0.5,7)--(0.5,0.5)-- cycle;
  \node at (2.5, -0.25) {\large $c_1$};
  \node at (5, -0.25) {\large $c_2$};
  \node at (-0.25, 2.5) {\large $r_2$};
  \node at (-0.25, 5.5) {\large $r_1$};
  \node at (3.5, -1.5) {\large $(c)$};
\end{tikzpicture}}$$
    \caption{Theorem~\ref{thm:jsl} Cases}
    \label{fig:semilattice1}
\end{figure}
\noindent

\noindent
\textbf{Case 1:} $c_1=c_2=c$ and $(r,c)\in D_0$ for all $r_2\le r\le r_1$ (see Figure~\ref{fig:semilattice1} (a) where we are assuming that there are no empty positions between $(r_1,c)$ and $(r_2,c)$). In this case, we claim that $D_1$ and $D_2$ have no common lower bound. To see this, note that since $\langle r_1,c\rangle\in D_1$, it follows that $\langle r_1,c\rangle\in T$ for all $T\preceq D_1$. On the other hand, since $\langle r_2,c\rangle,(r,c)\in D_2$ for all $r_2<r\le r_1$, applying Lemma~\ref{lem:block} (b), we have that $(r_1,c)\in T$ for all $T\preceq D_2$. Consequently, $D_1$ and $D_2$ have no common lower bound, as claimed.
\bigskip

\noindent
\textbf{Case 2:} $c_1=c_2=c$ and there exists $r_2<r<r_1$ such that $(r,c)\notin D_0$ (see Figure~\ref{fig:semilattice1} (a) where we are assuming that there exists an empty position between $(r_1,c)$ and $(r_2,c)$); note that $r\le r^*_1$. In this case, we claim that $D_1$ and $D_2$ have a meet. To see this, note that, evidently, $$D_3=(D_0\backslash\{(r_1,c),(r_2,c)\})\cup\{(r^*_1,c),(r^*_2,c),\langle r_1,c\rangle,\langle r_2,c\rangle\}=\mathcal{G}(D_1,r_2)=\mathcal{G}(D_2,r_1)$$ and, considering Corollary~\ref{cor:cover}, $D_3\precdot D_1,D_2$. Thus, applying Proposition~\ref{prop:meet}, it follows that $D_3=D_1\wedge D_2$, i.e., $D_1$ and $D_2$ have a meet, as claimed.
\bigskip

\noindent
\textbf{Case 3:} $c_1>c_2$ and $r_1^*=r_2$ (see Figure~\ref{fig:semilattice1} (b) where we are assuming that there exists no empty positions between $(r_1,c_1)$ and $(r_2,c_1)$). In this case, we claim that $D_1$ and $D_2$ have no common lower bound. To see this, note that since $\langle r_2,c_2\rangle\in D_2$, it follows that $\langle r_2,c_2\rangle\in T$ for all $T\preceq D_2$. Now, as $(r_2,c_1)\in D_1$, applying Lemma~\ref{lem:block} (a), we have that $(r_2,c_1)$ or $\langle r_2,c_1\rangle\in T$ for all $T\preceq D_1$. Consequently, since $(r_2,c_2)\in D_1$ with $c_2<c_1$, it follows that $(r_2,c_2)\in T$ for all $T\preceq D_1$. Thus, $D_1$ and $D_2$ have no common lower bounds, as claimed.
\bigskip

\noindent
\textbf{Case 4:} $c_1>c_2$ and $r_1^*\neq r_2$ (see Figure~\ref{fig:semilattice1} (b) where we are assuming that there exists an empty position between $(r_1,c_1)$ and $(r_2,c_1)$). In this case, we claim that $D_1$ and $D_2$ have a meet. To see this, note that since $\mathcal{G}(D_0,r_2)=D_0\ldownarrow^{(r_2,c_2)}_{(r_2^*,c_2)}\cup\{\langle r_2,c_2\rangle\}$, it follows that $(r_2,\tilde{c})\notin D_0$ for $\tilde{c}>c$. Consequently, since $c_1>c_2$ and $r_1^*\neq r_2$, we have that $r_1^*>r_2$. Thus, $$D_3=(D_0\backslash\{(r_1,c),(r_2,c)\})\cup\{(r^*_1,c),(r^*_2,c),\langle r_1,c\rangle,\langle r_2,c\rangle\}=\mathcal{G}(D_1,r_2)=\mathcal{G}(D_2,r_1)$$ and, considering Corollary~\ref{cor:cover}, $D_3\precdot D_1,D_2$. Therefore, applying Proposition~\ref{prop:meet}, it follows that $D_3=D_1\wedge D_2$, i.e., $D_1$ and $D_2$ have a meet, as claimed.
\bigskip

\noindent
\textbf{Case 5:} $c_1<c_2$ (see Figure~\ref{fig:semilattice1} (c)). In this case, we claim that $D_1$ and $D_2$ have a meet. To see this, note that, evidently, $$D_3=(D_0\backslash\{(r_1,c),(r_2,c)\})\cup\{(r^*_1,c),(r^*_2,c),\langle r_1,c\rangle,\langle r_2,c\rangle\}=\mathcal{G}(D_1,r_2)=\mathcal{G}(D_2,r_1)$$ and, considering Corollary~\ref{cor:cover}, $D_3\precdot D_1,D_2$. Thus, applying Proposition~\ref{prop:meet}, it follows that $D_3=D_1\wedge D_2$, i.e., $D_1$ and $D_2$ have a meet, as claimed.
\bigskip

\noindent
The result follows.
\end{proof}

Now that we have established that $\mathcal{P}_G(D)$ is a join-semilattice for any diagram $D$ with $|G(D)|=0$, it is natural to ask when such posets are, in fact, lattices. Note that in the case that $\mathcal{P}_G(D)$ is a lattice, it must have a unique minimal element. Since $\mathcal{P}_G(D)$ always has a unique maximal element, i.e., $D$, it follows that if $\mathcal{P}_G(D)$ is a lattice, then it is bounded. Conversely, combining Theorem~\ref{thm:jsl} above with Proposition~\ref{prop:Stanley} below coming from~\cite{EC1}, it follows that if $\mathcal{P}_G(D)$ is bounded, then it is a lattice.

\begin{prop}[\cite{EC1}, Proposition 3.3.1]\label{prop:Stanley}
If $\mathcal{P}$ is a finite join-semilattice with a unique minimal element, then $\mathcal{P}$ is a lattice. 
\end{prop}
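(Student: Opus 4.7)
The plan is to produce the meet of any two elements $x,y \in \mathcal{P}$ as a join of their common lower bounds, exploiting the fact that joins are guaranteed to exist in a join-semilattice. The only data we need is: (i) joins of arbitrary nonempty finite subsets exist (which follows from the binary join operation together with finiteness of $\mathcal{P}$, by iterating pairwise joins), and (ii) the set of common lower bounds is nonempty.

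First, I would fix arbitrary $x,y \in \mathcal{P}$ and define $L(x,y) = \{z \in \mathcal{P} \mid z \preceq x \text{ and } z \preceq y\}$. The key use of the hypothesis of a unique minimal element $\hat{0}$ is right here: since $\hat{0} \preceq w$ for every $w \in \mathcal{P}$, we have $\hat{0} \in L(x,y)$, so $L(x,y)$ is nonempty. Since $\mathcal{P}$ is finite, $L(x,y)$ is a nonempty finite set, and so the iterated join $m := \bigvee_{z \in L(x,y)} z$ exists in $\mathcal{P}$ (formally, one picks an ordering $z_1, z_2, \ldots, z_k$ of $L(x,y)$ and uses induction: $z_1 \vee z_2$ exists by the join-semilattice hypothesis, and inductively $(((z_1 \vee z_2) \vee z_3) \vee \cdots) \vee z_k$ exists).

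Next, I would verify that $m$ is the meet $x \wedge y$. For the lower bound property, observe that $x$ is an upper bound for $L(x,y)$ by definition of $L(x,y)$, so by the least-upper-bound property of the join, $m \preceq x$; the same argument gives $m \preceq y$. Hence $m \in L(x,y)$. For the greatest property, if $w \in \mathcal{P}$ satisfies $w \preceq x$ and $w \preceq y$, then $w \in L(x,y)$, so $w$ is one of the joinands and therefore $w \preceq m$. Thus $m$ is the greatest lower bound of $x$ and $y$.

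There is no real obstacle here — the argument is essentially a one-paragraph standard duality: in the presence of a bottom element, arbitrary finite joins conjure up arbitrary finite meets, because any meet can be recovered as the join of the (nonempty) set of all common lower bounds. The only subtle point worth stating cleanly in the write-up is the role of the unique minimum: without it, $L(x,y)$ could be empty (as in Figure~\ref{fig:posetproperties}(b), where the two incomparable minima make certain pairs have no common lower bound), and the construction would break down immediately.
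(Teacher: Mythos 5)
Your argument is correct and is precisely the standard proof of this result (the paper itself gives no proof, only the citation to \cite{EC1}, where Proposition 3.3.1 is proved exactly this way: the meet $x\wedge y$ is realized as the join of the nonempty finite set of common lower bounds, with the unique minimal element guaranteeing nonemptiness). The one point you could state explicitly is that in a \emph{finite} poset a unique minimal element is automatically a minimum, i.e., lies below every element, which is what makes $\hat{0}\in L(x,y)$; but this is routine and does not affect the validity of the proof.
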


Thus, we have established the following.

\begin{theorem}
    Let $D$ be a diagram with $|G(D)|=0$. Then $\mathcal{P}_G(D)$ is a lattice if and only if it is bounded.
\end{theorem}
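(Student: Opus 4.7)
The plan is to prove both directions by combining Theorem~\ref{thm:jsl} with Proposition~\ref{prop:Stanley} and the easy observation that $D$ itself is always a unique maximum of $\mathcal{P}_G(D)$.

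First I would record that $D$ is the unique maximal element of $\mathcal{P}_G(D)$. This is immediate from the definitions: every element of $\mathrm{GKD}(D)$ other than $D$ is obtained from $D$ by at least one ghost move, so it lies strictly below $D$ in the order; equivalently, $D$ is the unique element $T \in \mathcal{P}_G(D)$ with $|G(T)| = 0$, and by Theorem~\ref{thm:ranked} this element sits at the top rank. Hence $\mathcal{P}_G(D)$ is bounded if and only if it has a unique minimal element.

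For the forward direction, I would argue that if $\mathcal{P}_G(D)$ is a lattice then, being finite, it must have a global minimum (the meet of all its elements), so together with the unique maximum $D$ it is bounded. For the converse, I would appeal directly to Proposition~\ref{prop:Stanley}: if $\mathcal{P}_G(D)$ is bounded then it has a unique minimal element, and by Theorem~\ref{thm:jsl} it is a finite join-semilattice, so Proposition~\ref{prop:Stanley} upgrades it to a lattice.

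In truth there is no real obstacle here; the entire content of the theorem has been assembled in the preceding discussion, and the proof amounts to citing Theorem~\ref{thm:jsl} and Proposition~\ref{prop:Stanley} together with the remark that $D$ is the unique maximum. The only care needed is to state the equivalence cleanly rather than re-derive the semilattice or ranking properties.
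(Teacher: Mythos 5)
Your argument is correct and matches the paper's reasoning exactly: the paper likewise observes that $D$ is the unique maximal element, deduces the forward direction from the existence of a unique minimal element in a finite lattice, and obtains the converse by combining Theorem~\ref{thm:jsl} with Proposition~\ref{prop:Stanley}. Nothing further is needed.
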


In the following section, we consider the question of characterizing diagrams $D$ with $|G(D)|=0$ for which $\mathcal{P}_G(D)$ is bounded, and, thus, a lattice.

\subsection{Bounded}

In this final subsection, given a diagram $D$ with $|G(D)|=0$, we establish a necessary condition for when $\mathcal{P}_G(D)$ is bounded. Our characterization is in terms of a sequence of integers associated with the free cells of a diagram, called the ``free cell sequence."

Given a diagram $D$ with $|G(D)|=0$, define $$FR(D)=\{r~|~\text{there exists}~\widehat{r},c\in\mathbb{Z}_{>0}~\text{such that}~\widehat{r}<r,~(r,c)\in D,~\text{and}~(\widehat{r},c),(r,\tilde{c})\notin D~\text{for all}~\tilde{c}>c\},$$ i.e., $FR(D)$ is the set of rows containing free cells in $D$. Letting $FR(D)=\{r_i\}_{i=1}^n$ with $r_i>r_{i+1}$ for $i\in [n-1]$ in the case $n>1$, define the \textbf{free cell sequence} of $D$ to be $(c_1,\hdots,c_n)$ where $c_i=\max\{c~|~(r_i,c)\in D\}$; note that $(r_i,c_i)$ is a free cell of $D$.

% First, we will start by characterizing a sequence built on the free cells \newline

% \noindent
% \textbf{Free Cell Sequence:}
% \begin{enumerate}
%     \item Take a diagram $D$ with no ghost cells. For each row $r$ that contains a free cell, order the cells $(r_i, c_i)$ in descending order by row
%     \item Isolate the columns $c_i$ such that the sequence looks like
%     $$c_1, c_2, \dots, c_i, \dots c_n$$
%     where $n$ is the number of free cells in $D$
% \end{enumerate}

\begin{example}
    The diagrams $D_1$, $D_2$, and $D_3$ illustrated in Figure~\ref{fig:frs} below have free cell sequences $(1,2,2)$, $(1,2,1)$, and $(1,2,3)$, respectively.
    \begin{figure}[H]
        \centering
        $$\scalebox{0.8}{\begin{tikzpicture}[scale=0.6]
            \draw (0,5.5)--(0,0)--(2.5,0);
            \draw (0,5)--(1,5)--(1,4)--(0,4)--(0,5);
            \draw (0,4)--(1,4)--(1,3)--(0,3)--(0,4);
            \draw (1,4)--(2,4)--(2,3)--(1,3)--(1,4);
            \draw (1,2)--(2,2)--(2,1)--(1,1)--(1,2);
            \node at (0.5, 4.5) {$\cdot$};
            \node at (0.5, 3.5) {$\cdot$};
            \node at (1.5, 3.5) {$\cdot$};
            \node at (1.5, 1.5) {$\cdot$};
            \node at (1.5, -1){\large{$D_1$}};
        \end{tikzpicture}}\quad\quad\quad\quad\scalebox{0.8}{\begin{tikzpicture}[scale=0.6]
            \draw (0,4.5)--(0,0)--(2.5,0);
            \draw (0,4)--(1,4)--(1,3)--(0,3)--(0,4);
            \draw (1,3)--(2,3)--(2,2)--(1,2)--(1,3);
            \draw (0,2)--(1,2)--(1,1)--(0,1)--(0,2);
            \draw (1,1)--(2,1)--(2,0)--(1,0)--(1,1);
            \node at (0.5, 3.5) {$\cdot$};
            \node at (1.5, 2.5) {$\cdot$};
            \node at (0.5, 1.5) {$\cdot$};
            \node at (1.5, 0.5) {$\cdot$};
            \node at (1.5, -1){\large $D_2$};
        \end{tikzpicture}}\quad\quad\quad\quad\scalebox{0.8}{\begin{tikzpicture}[scale=0.6]
            \draw (0,4.5)--(0,0)--(4.5,0);
            \draw (0,4)--(1,4)--(1,3)--(0,3)--(0,4);
            \draw (1,3)--(2,3)--(2,2)--(1,2)--(1,3);
            \draw (2,2)--(3,2)--(3,1)--(2,1)--(2,2);
            \draw (3,1)--(4,1)--(4,0)--(3,0)--(3,1);
            \node at (0.5, 3.5) {$\cdot$};
            \node at (1.5, 2.5) {$\cdot$};
            \node at (2.5, 1.5) {$\cdot$};
            \node at (3.5, 0.5) {$\cdot$};
            \node at (2, -1){\large $D_3$};
        \end{tikzpicture}}$$
        \caption{Diagram Examples}
        \label{fig:frs}
    \end{figure}
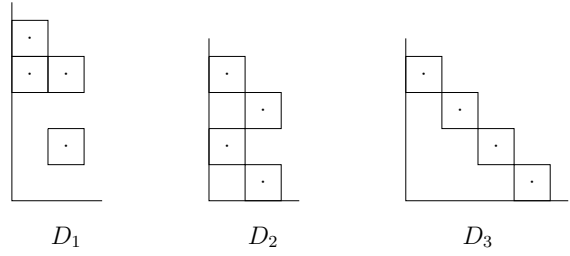
\end{example}

\begin{theorem}\label{thm:bounded-increase}
    Let $D$ be a diagram with $|G(D)|=0$. If $\mathcal{P}_G(D)$ is bounded, then the free cell sequence of $D$ is strictly increasing.
\end{theorem}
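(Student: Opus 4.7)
The plan is to prove the contrapositive: if the free cell sequence $(c_1,\dots,c_n)$ of $D$ is not strictly increasing, then $\mathcal{P}_G(D)$ is not bounded. Pick $i$ with $c_i\ge c_{i+1}$, and abbreviate $r=r_i$, $r'=r_{i+1}$, $c=c_i$, $c'=c_{i+1}$, so $r>r'$, $c\ge c'$, and both $(r,c)$ and $(r',c')$ are free cells of $D$ that are rightmost in their respective rows. We will exhibit two distinct minimal elements of $\mathcal{P}_G(D)$; since $D$ is the unique maximal element of $\mathcal{P}_G(D)$, producing two minimal elements rules out boundedness.

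The first minimal element, $T_{r'}$, is any element reachable from $D$ by a maximal sequence of ghost moves beginning with $\mathcal{G}(D,r')$; since that first move places $\langle r',c'\rangle$ and ghost cells persist under subsequent moves, $\langle r',c'\rangle\in T_{r'}$. The second minimal element $T_r$ will be constructed starting with $\mathcal{G}(D,r)$ and a specific continuation. A key preliminary observation, which we would establish as a lemma, is that rows strictly between $r$ and $r'$ cannot contain any cell in a column $\ge c$: any such cell would be the rightmost in its row and would force either that row into $FR(D)$ (contradicting the consecutiveness of $r,r'$ in $FR(D)$) or its column to be fully filled down through row $r'$ (contradicting that $(r',c')$ is rightmost in row $r'$ when $c>c'$, or that $(r',c')$ is free when $c=c'$).

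In the case $c>c'$, the observation allows us to push the cell $(r,c)$ down column $c$ through a sequence of ghost moves at intermediate rows until it arrives at $(r',c)$; applying $\mathcal{G}(\cdot,r')$ then moves this new rightmost cell of row $r'$ (which lies to the right of $(r',c')$ since $c>c'$) and leaves $\langle r',c\rangle$ as the rightmost in row $r'$, precluding all further moves at $r'$. Hence $(r',c')$ is never disturbed, so $\langle r',c'\rangle\notin T_r$ and $T_r\neq T_{r'}$. In the case $c=c'$, the same observation implies column $c$ is empty at every row strictly between $r$ and $r'$; starting with $\mathcal{G}(D,r)$ then allows the cell from $(r,c)$ to be pushed down column $c$ past $(r',c)$ and to land below row $r'$, while starting with $\mathcal{G}(D,r')$ places $\langle r',c\rangle$ early, which blocks the migrating cell originating from $(r,c)$ from passing below row $r'$. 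Consequently the sets of rows of column $c$ containing a non-ghost cell differ between $T_r$ and $T_{r'}$, again giving $T_r\neq T_{r'}$. The principal obstacle is this case $c=c'$: verifying the claimed discrepancy requires handling the sub-cases $r=r'+1$ and $r>r'+1$ separately and carefully tracking how ghost cells accumulate in column $c$ as later moves become blocked, with the preliminary observation doing the work of ruling out interference from cells in other columns.
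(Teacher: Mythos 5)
Your overall strategy is the same as the paper's: prove the contrapositive, establish that the rows strictly between $r=r_i$ and $r'=r_{i+1}$ contain no cells in the relevant columns (your justification of this observation is essentially the paper's, and the weaker version you state --- columns $\ge c_i$ rather than $\ge c_{i+1}$ --- suffices for how you use it), and then produce two distinct minimal elements by comparing ``ghost move at $r'$ first'' against ``cascade the free cell of row $r$ down column $c_i$ first.'' Your Case $c>c'$ is essentially complete and matches the paper's Case 1: after the cascade, row $r'$ acquires a cell at $(r',c)$ strictly to the right of $(r',c')$, and since ghost moves never remove the rightmost cell of a row without leaving a cell or ghost in its place, $(r',c')$ can never again be rightmost, hence survives as a non-ghost cell in every diagram below, while $\langle r',c'\rangle$ persists in everything below $\mathcal{G}(D,r')$. (Your extra application of $\mathcal{G}(\cdot,r')$ at the end is unnecessary and its description presumes the move is nontrivial, but nothing breaks either way.)

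The genuine gap is exactly where you flag it: in the case $c=c'$ you assert that ``the sets of rows of column $c$ containing a non-ghost cell differ between $T_r$ and $T_{r'}$'' but do not prove it, and as stated it is not obviously robust --- for instance, simply counting cells of column $c$ that end up weakly below row $r'$ need not distinguish the two branches, since in the $T_r$ branch the original cell $(r',c)$ may or may not subsequently drop below row $r'$ depending on how much empty space remains in the column. The paper closes this case by isolating a single position, $(r'+1,c)$. In the branch that applies $\mathcal{G}(D,r')$ first, the ghost $\langle r',c\rangle$ forces the cell migrating down from $(r,c)$ to come to rest at $(r'+1,c)$, where it is permanently blocked and hence remains a non-ghost cell in \emph{every} minimal element below (blocked cells can neither move nor become ghosts); in the branch that cascades first, the migrating cell passes through $(r'+1,c)$ and drops past the still-unghosted $(r',c)$, leaving the ghost $\langle r'+1,c\rangle$, which persists forever. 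Since no position can hold both a ghost and a non-ghost cell, the two minimal elements are distinct, and this single-position argument handles $r=r'+1$ and $r>r'+1$ uniformly, so the sub-case analysis you anticipate is not actually needed. With that invariant supplied, your proof is complete and coincides with the paper's.
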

\begin{proof}
    We prove the contrapositive. Let $D$ be a diagram with $|G(D)|=0$ such that the free cell sequence $(c_1,\hdots,c_n)$ is not strictly increasing. Then there exists $i\in [n-1]$ such that $c_i\ge c_{i+1}$. By the definition of $(c_1,\hdots,c_n)$, there exists $r_i,r_{i+1}\in \mathbb{Z}_{>0}$ with $r_i>r_{i+1}$ such that $(r_i, c_i),(r_{i+1},c_{i+1})\in D$ are free cells in $D$. We claim that $(r,c)\notin D$ for all $c\ge c_{i+1}$ and $r_{i+1}<r<r_i$. To see this, assume otherwise. Then there exists $r^*,c^*\in\mathbb{Z}_{>0}$ such that $c^*\ge c_{i+1}$, $r_{i+1}<r^*<r_i$, and $(r^*,c^*)\in D$ is rightmost cell in row $r^*$ of $D$. Now, considering the definition of $(c_1,\hdots,c_n)$ once again along with the fact that $r_{i+1}<r^*<r_i$, it follows that $(r^*,c^*)$ is not a free cell of $D$. Thus, $(\tilde{r},c^*)\in D$ for all $1\le \tilde{r}\le r^*$; but, since $c^*\ge c_{i+1}$ and $r^*>r_{i+1}$, this implies that $(r_{i+1},c_{i+1})$ cannot be a free cell of $D$, which is a contradiction. Therefore, the claim follows. To finish the proof, we consider the cases of $c_i=c_{i+1}$ and $c_i>c_{i+1}$ separately.
    \bigskip

    \noindent
    \textbf{Case 1:} $c_i > c_{i+1}$. In this case, we claim that there exist minimal diagrams $D_1,D_2\in\mathcal{P}_G(D)$ such that $\langle r_{i+1},c_{i+1}\rangle\in D_1$ and $( r_{i+1},c_{i+1})\in D_2$, from which it follows that $\mathcal{P}_G(D)$ is not bounded. To establish the claim, let
    \begin{enumerate}
        \item $\widehat{D}_1$ be the diagram formed from $D$ by applying a single ghost move at row $r_{i+1}$, i.e., $\widehat{D}_1=\mathcal{G}(D, r_{i+1})$; and
        \item $\widehat{D}_2$ be the diagram formed from $D$ by sequentially applying a single ghost move at rows $r_i$ down to $r_{i+1}+1$ in decreasing order.
    \end{enumerate}
    Since $(r_{i+1},c_{i+1})$ is a free cell of $D$, it follows that $\langle r_{i+1},c_{i+1}\rangle\in \widehat{D}_1$ and, consequently, $\langle r_{i+1},c_{i+1}\rangle\in T$ for all $T\preceq\widehat{D}_1$; that is, there exists a minimal element $D_1\in \mathcal{P}_G(D)$ for which $D_1\preceq \widehat{D}_1$ and, as a result, $\langle r_{i+1},c_{i+1}\rangle\in D_1$. On the other hand, since $(r_{i},c_{i})$ is a free cell of $D$ and $(\tilde{r},c_i)\notin D$ for $r_{i+1}< \tilde{r}<r_i$, it follows that $(r_{i+1},c_{i+1}),(r_{i+1},c_i)\in \widehat{D}_2$. Thus, applying Lemma~\ref{lem:block} (a), it follows that $(r_{i+1},c_{i+1})\in T$ for all $T\preceq\widehat{D}_2$; that is, there exists a minimal element $D_2\in \mathcal{P}_G(D)$ for which $D_2\preceq \widehat{D}_2$ and, as a result, $(r_{i+1},c_{i+1})\in D_2$. Hence, the claim follows and, as noted above, $\mathcal{P}_G(D)$ is not bounded.
    \bigskip

    % For $D_1$, note that by assumption we have that $\mathcal{G}(D, r_{i+1})=D'\neq D$ and $\langle r_{i+1},c_{i+1}\rangle\in D'$. Consequently, 

    % Perform $\mathcal{G}(D, r_{i+1})$ and call the resulting diagram $D_{i+1}$. Then $\langle r_{i+1}, c_{i+1} \rangle \in D_{i+1}$ and thus $\langle r_{i+1}, c_{i+1}\rangle \in D'$ such that $D' \in \mathcal{P}_G(D) \text{ and } D' \prec D_{i+1}$. 
    
    % Now construct a sequence of rows $\{n\}$ such that performing $\mathcal{G}(D, \{n\})$ acts exclusively on the free cell $(r_i, c_i)$ until it becomes blocked and call the resulting diagram $D_i$. Evidently, as $(r', c') \notin D$, then either $(r_{i+1}, c_i) \in D_i$ or $\langle r_{i+1}, c_i \rangle \in D_i$. So it must be the case that $\mathcal{G}(D_i, r_{i+1}) = D_i$, meaning $\langle r_{i+1}, c_{i+1} \notin D_i$. As such, $\langle r_{i+1}, c_{i+1} \notin D', D' \in \mathcal{P}_G(D)$ and $D' \prec D_i$.  Therefore it must be the case that there are two distinct minimal elements in $\mathcal{P}(D)$, one containing $\langle r_{i+1}, c \rangle$ and one without, so $\mathcal{P}_G(D)$ is not bounded.
    % \bigskip

    \noindent
    \textbf{Case 2}: $c_i = c_{i+1}=c$. In this case, we claim that there exist minimal diagrams $D_1,D_2\in\mathcal{P}_G(D)$ such that $(r_{i+1}-1,c)\in D_1$ and $\langle r_{i+1}-1,c\rangle\in D_2$, from which it follows that $\mathcal{P}_G(D)$ is not bounded. To establish the claim, let
    \begin{enumerate}
        \item $\widehat{D}_1$ be the diagram formed from $D$ by applying a single ghost move at row $r_{i+1}$ and then sequentially applying a single ghost move at rows $r_i$ down to $r_{i+1}+1$ in decreasing order; and 
        \item $\widehat{D}_2$ be the diagram formed from $D$ by sequentially applying a single ghost move at rows $r_i$ down to $r_{i+1}+1$ in decreasing order.
    \end{enumerate}
    Since $(r_{i},c),(r_{i+1},c)$ are free cells of $D$ and $(\tilde{r},c)\notin D$ for $r_{i+1}<\tilde{r}<r_i$, it follows that $\langle r_{i+1},c\rangle, (r_{i+1}+1,c)\in \widehat{D}_1$ and $\langle r_{i+1}+1,c\rangle\in \widehat{D}_2$.
    Thus, it follows that $(r_{i+1}+1,c)\in T$ (resp., $\langle r_{i+1}+1,c\rangle\in T$) for all $T\preceq\widehat{D}_1$ (resp., $T\preceq\widehat{D}_2$); that is, there exists a minimal element $D_1\in \mathcal{P}_G(D)$ (resp., $D_2\in \mathcal{P}_G(D)$) for which $D_1\preceq \widehat{D}_1$ (resp., $D_2\preceq \widehat{D}_2$) and, as a result, $(r_{i+1}+1,c)\in D_1$ (resp., $\langle r_{i+1}+1,c\rangle\in D_2$). Hence, the claim follows and, as noted above, it follows that $\mathcal{P}_G(D)$ is not bounded. \qedhere
    
    % as $c_i = c_{i+1}$, there are two free cells in the same column. By construction of the Free Cell Sequence and as $(r_{i+1},c_{i+1})$ is a free cell, this implies that there are no cells $(r', c')$ such that $r_{i+1} \leq r' < r_i$ and $c' = c_i = c_{i+1}$. 
    
    % Perform $\mathcal{G}(D, r_{i+1})$ and call the resulting diagram $D_{i+1}$. Then $\langle r_{i+1}, c_{i+1} \rangle \in D_{i+1}$ and thus $\langle r_{i+1}, c_{i+1}\rangle \in D'$ such that $D' \in \mathcal{P}_G(D) \text{ and } D' \prec D_{i+1}$. 
    
    % Construct a sequence of rows $\{n\}$ that act on $(r_i, c_i)$ until it becomes blocked and perform $\mathcal{G}(D, \{n\})$ and call the resulting diagram $D_i$. As $\{n\}$ acts exclusively on $(r_i, c_i)$, $(r_{i+1}, c_{i+1}) \in D_i$ and $\langle r_{i+1}, c_{i+1} \rangle \notin D_i$. As well, as $\{n\}$ acts exclusively on $(r_i, c_i )$ until it becomes blocked, $\mathcal{G}(D_i, r_{i+1}) = D_i$. Thus $\langle r_{i+1}, c_{i+1} \rangle \notin D'$ such that $D' \in \mathcal{P}_G(D), D' \prec D_i$. Similarly to Case 1, it must be the case that there are two minimal elements in $\mathcal{P}_G(D)$, thus $\mathcal{P}_G(D)$ is not bounded.
\end{proof}

Note that the free cell sequence of a diagram $D$ is strictly increasing if and only if there is at most one free cell per column in $D$ and the free cells in $D$ occur in descending row order from left to right. Therefore, we can equivalently express Theorem~\ref{thm:bounded-increase} as follows.

\begin{corollary}\label{cor:b2}
    Let $D$ be a diagram with $|G(D)|=0$. If $\mathcal{P}_G(D)$ is bounded, then $D$ has at most one free cell per column and the free cells occur in descending row order from right to left.
\end{corollary}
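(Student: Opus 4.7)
The plan is straightforward: Corollary~\ref{cor:b2} is a direct restatement of Theorem~\ref{thm:bounded-increase}, with the condition ``the free cell sequence of $D$ is strictly increasing'' translated into a geometric condition on the placement of free cells in $D$. All the substantive content is already contained in Theorem~\ref{thm:bounded-increase}, so the proof reduces to verifying the equivalence asserted in the sentence immediately preceding the corollary statement.

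First I would unpack the definition of the free cell sequence. By construction, each row of $D$ containing a free cell contains exactly one such cell, namely the rightmost cell in that row; moreover, the rows of $D$ containing a free cell are enumerated as $r_1 > r_2 > \cdots > r_n$, and $c_i$ is defined so that $(r_i,c_i)$ is that free cell. Hence the free cells of $D$ are exactly the pairs $(r_i,c_i)$ for $i \in [n]$, and the multiset of columns that host a free cell of $D$ is precisely $\{c_1, \ldots, c_n\}$.

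Next I would verify the equivalence itself. The sequence $(c_1, \ldots, c_n)$ is strictly increasing if and only if (a) its entries are pairwise distinct and (b) $c_i < c_j$ whenever $i < j$. Condition (a) is literally the statement that each column of $D$ contains at most one free cell. Since the indexing is chosen so that $r_1 > r_2 > \cdots > r_n$, condition (b) says that increasing column index corresponds to decreasing row index, i.e., when the free cells of $D$ are read off in order of increasing column they appear in strictly decreasing row order. Equivalently, scanning the free cells from right to left by column visits rows in monotone order, which is exactly the geometric condition recorded in the corollary statement; the exclusion of ties is precisely condition (a).

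With the equivalence in hand, the corollary follows as the contrapositive of Theorem~\ref{thm:bounded-increase}: if $D$ either has two free cells in a common column or has free cells whose row ordering across columns is not monotone in the stated direction, then $(c_1, \ldots, c_n)$ fails to be strictly increasing, and so $\mathcal{P}_G(D)$ cannot be bounded. There is no real obstacle in this argument, since the hard work — the explicit construction of two incomparable minimal elements when the free cell sequence fails to be strictly increasing — was already carried out inside the proof of Theorem~\ref{thm:bounded-increase}; the only thing to check here is that the bookkeeping matches.
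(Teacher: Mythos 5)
Your proposal is correct and matches the paper exactly: the paper offers no separate proof of Corollary~\ref{cor:b2}, deriving it solely from the observation that strict increase of the free cell sequence $(c_1,\ldots,c_n)$ (indexed so that $r_1>\cdots>r_n$) is equivalent to the stated geometric condition on free cells, which is precisely the equivalence you verify. The only caveat is a wording mismatch in the paper itself (the remark before the corollary says ``from left to right'' while the corollary says ``from right to left''), and your careful phrasing in terms of increasing column versus decreasing row resolves this correctly.
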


Unfortunately, a diagram $D$ satisfying the condition of Theorem~\ref{thm:bounded-increase} (equivalently, Corollary~\ref{cor:b2}) is not sufficient for $\mathcal{P}_G(D)$ to be bounded, as shown in the following example.

\begin{example}\label{not-bounded}    
The free cell sequence of the diagram $D$ shown in Figure~\ref{fig:bounded} $(a)$ is $(2,3)$, which is strictly increasing, yet $\mathcal{P}(D)$ contains the two distinct minimal elements illustrated in Figure~\ref{fig:bounded} $(b)$ and $(c)$. Thus, $D$ satisfies the conditions of Theorem~\ref{thm:bounded-increase} \textup(equivalently, Corollary~\ref{cor:b2}\textup), but $\mathcal{P}_G(D)$ is not bounded.

     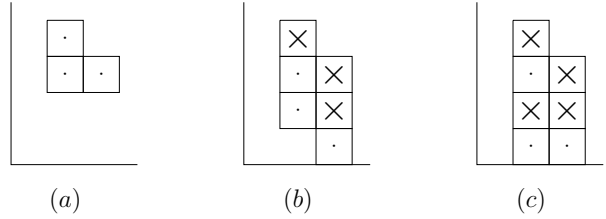
\begin{figure}[H]
         \centering
         $$\scalebox{0.8}{\begin{tikzpicture}[scale=0.6]
            \draw (0,4.5)--(0,0)--(3.5,0);
            \draw (1,4)--(2,4)--(2,3)--(1,3)--(1,4);
            \draw (1,3)--(2,3)--(2,2)--(1,2)--(1,3);
            \draw (2,3)--(3,3)--(3,2)--(2,2)--(2,3);
            \node at (1.5, 3.5) {$\cdot$};
            \node at (1.5, 2.5) {$\cdot$};
            \node at (2.5, 2.5) {$\cdot$};
            \node at (1.5, -1) {\large $(a)$};
            \end{tikzpicture}}\quad\quad\quad\quad\scalebox{0.8}{\begin{tikzpicture}[scale=0.6]
            \draw (0,4.5)--(0,0)--(3.5,0);
            \draw (1,4)--(2,4)--(2,3)--(1,3)--(1,4);
            \draw (1,3)--(2,3)--(2,2)--(1,2)--(1,3);
            \draw (1,2)--(2,2)--(2,1)--(1,1)--(1,2);
            \draw (2,3)--(3,3)--(3,2)--(2,2)--(2,3);
            \draw (2,2)--(3,2)--(3,1)--(2,1)--(2,2);
            \draw (2,1)--(3,1)--(3,0)--(2,0)--(2,1);
            \node at (1.5, 3.5) {$\bigtimes$};
            \node at (1.5, 2.5) {$\cdot$};
            \node at (1.5, 1.5) {$\cdot$};
            \node at (2.5, 2.5) {$\bigtimes$};
            \node at (2.5, 1.5) {$\bigtimes$};
            \node at (2.5, 0.5) {$\cdot$};
            \node at (1.5, -1) {\large $(b)$};
        \end{tikzpicture}} \quad\quad\quad\quad \scalebox{0.8}{\begin{tikzpicture}[scale=0.6]
            \draw (0,4.5)--(0,0)--(3.5,0);
            \draw (1,4)--(2,4)--(2,3)--(1,3)--(1,4);
            \draw (1,3)--(2,3)--(2,2)--(1,2)--(1,3);
            \draw (1,2)--(2,2)--(2,1)--(1,1)--(1,2);
            \draw (1,1)--(2,1)--(2,0)--(1,0)--(1,1);
            \draw (2,3)--(3,3)--(3,2)--(2,2)--(2,3);
            \draw (2,2)--(3,2)--(3,1)--(2,1)--(2,2);
            \draw (2,1)--(3,1)--(3,0)--(2,0)--(2,1);
            \node at (1.5, 3.5) {$\bigtimes$};
            \node at (1.5, 2.5) {$\cdot$};
            \node at (1.5, 1.5) {$\bigtimes$};
            \node at (1.5, 0.5) {$\cdot$};
            \node at (2.5, 2.5) {$\bigtimes$};
            \node at (2.5, 1.5) {$\bigtimes$};
            \node at (2.5, 0.5) {$\cdot$};
            \node at (1.5, -1) {\large $(c)$};
        \end{tikzpicture}}$$
         \caption{Two unique minimal elements of $\mathcal{P}(D)$}
         \label{fig:bounded}
     \end{figure}
\end{example}

\begin{remark}
    It is worth noting that the diagram $D$ of Figure~\ref{fig:bounded} $(a)$ satisfies the sufficient condition of \textup{\cite{KPoset1}} for the Kohnert poset of $D$, i.e., $\mathcal{P}(D)$, to be bounded. Thus, $\mathcal{P}(D)$ is bounded, while $\mathcal{P}_G(D)$ is not bounded.
\end{remark}

For future work, it would be interesting if one could strengthen the necessary condition of Theorem~\ref{thm:bounded-increase} (equivalently, Corollary~\ref{cor:b2}) to provide a characterization for when $\mathcal{P}_G(D)$ is bounded completely in terms of $D$. The authors feel that making use of a labeling of the diagram $D$ could prove helpful. For an example of a labeling, see Section 4 of \cite{KPuzzle} where a labeling is used to compute the max number of ghost cells contained in a diagram of $\mathcal{P}_G(D)$; at present, the authors are unaware as to whether this particular labeling can be applied to the problem of establishing when $\mathcal{P}_G(D)$ is bounded. As noted in the previous section, a characterization of boundedness would also provide a characterization for when $\mathcal{P}_G(D)$ is a lattice.

In addition, one could also explore the polynomials that can be naturally associated with the posets $\mathcal{P}_G(D)$ via a construction similar to that given for Lascoux polynomials given in Remark~\ref{rem:Lascoux}, i.e., $$\mathfrak{G}_D=\sum_{T\in \mathrm{GKD}(D)}\mathrm{wt}^+(T)$$ where $\mathrm{wt}^+(T)=\prod_{r\ge 1}(-1)^{|G(T)|}x_r^{|\{c~|~(r,c)~\text{or}~\langle r,c\rangle\in T\}|}.$ Other than investigating whether or not such polynomials have any desirable properties or interpretations, one could also explore the question of whether there are any relationships between poset properties of $\mathcal{P}_G(D)$ and polynomial properties of $\mathfrak{G}_D$.

\printbibliography

\end{document}